\documentclass[a4paper, 12pt]{amsart}
\usepackage[T1]{fontenc}
\usepackage{xspace}
\usepackage{enumerate}
\usepackage[utf8]{inputenc} 
\usepackage[english]{babel}
\usepackage{lmodern}
\usepackage{microtype}
\usepackage{amssymb}
\usepackage{verbatim}
\usepackage{amsthm}
\usepackage{amsmath}
\usepackage{mathtools}
\usepackage{color}
\usepackage{tikz}
\usepackage{threeparttable}
\usepackage{graphicx}
\usepackage{caption}
\usepackage{paralist}
\usepackage[pagebackref=true, pdfborder={0 0 0}, linktocpage]{hyperref}
\usepackage{backref}
\usepackage{aliascnt}
\usepackage{cleveref}
\usepackage{dsfont}
\usepackage{mathrsfs}
\usepackage{bm}


\newtheorem{theorem}{Theorem}[section]
\newtheorem{lemma}[theorem]{Lemma}

\theoremstyle{remark}
\newtheorem{remark}[theorem]{Remark}

\newtheorem{definition}[theorem]{Definition}

\numberwithin{equation}{section}

\begin{document}

\title[Regularization by noise for averaged NSE]{Regularization by noise of an averaged version of the Navier-Stokes equations}
\author{Theresa Lange}
\address{Fakult\"at f\"ur Mathematik, Universit\"at Bielefeld, D-33501 Bielefeld, Germany}
\email{tlange@math.uni-bielefeld.de}

\date{May 30, 2022}

\begin{abstract}
	In Tao 2016, the author constructs an averaged version of the deterministic three-dimensional Navier-Stokes equations (3D NSE) which experiences blow-up in finite time. In the last decades, various works have studied suitable perturbations of ill-behaved deterministic PDEs in order to prevent or delay such behavior. A promising example is given by a particular choice of stochastic transport noise closely studied in Flandoli et al. 2021. We analyze the model in Tao 2016 in view of these results and discuss the regularization skills of this noise in the context of the averaged 3D NSE.
\end{abstract}

\keywords{3D Navier-Stokes equations, regularization by noise, transport noise}
\subjclass[2020]{60H15, 60H50, 76D05}

\maketitle 

\tableofcontents

\section{Introduction}\label{sec:intro}
\noindent
Consider the Navier-Stokes equations (NSE) on $\mathbb{R}^3$ describing the dynamics of an incompressible viscous fluid
\begin{equation}\label{eq:NSE}
	\begin{aligned}
		\partial_t u + (u \cdot \nabla)u + \nabla p &= \Delta u,\\
		{\rm div} u &= 0,\\
		u(0, \cdot) &= u_0,
	\end{aligned}
\end{equation}
with vector-valued velocity field $u:[0,\infty) \times \mathbb{R}^3 \rightarrow \mathbb{R}^3$ and scalar-valued pressure field $p:[0,\infty)\times \mathbb{R}^3 \rightarrow \mathbb{R}$. By a rescaling argument, we may assume unitary viscosity. The global well-posedness of 3D NSE forms a long-standing open problem and has attracted the attention of researchers ever since: it was established in the regime of small initial data (see e.g. \cite{koch2001}) as well as in the hyperdissipative case (replacing $\Delta$ by $-(-\Delta)^{\alpha}$, for $\alpha \geq \frac{5}{4}$ see e.g. \cite{katz2002}). The general case remains unresolved; in particular, there exists evidence for blow-up of solutions to variants of \eqref{eq:NSE} as exploited in e.g. \cite{const1986}, \cite{katz2005}, \cite{koch2001}, \cite{lei2009}, \cite{li2008}, \cite{mont2001}, \cite{ohkitani2017}, \cite{plech2003}. This note shall focus on the following result presented in \cite{tao2016}: consider the projection of \eqref{eq:NSE} onto the space of divergence-free vector fields, reading
\begin{equation}\label{eq:projNSE}
	\begin{aligned}
		\partial_t u &= \Delta u +B(u,u),\\
		u(0,\cdot)&=u_0.
	\end{aligned}
\end{equation}
Here $B$ denotes the bilinear Euler operator (see e.g. in \cite{tao2016}) which is symmetric and for sufficiently regular $u$ satisfies the cancellation property
\begin{equation}\label{eq:cancel}
	\langle B(u,u), u \rangle_{L^2(\mathbb{R}^3)} = 0
\end{equation}
where $\langle u, v \rangle_{L^2(\mathbb{R}^3)} := \int_{\mathbb{R}^3} u(x) \cdot v(x) {\rm d} x$ in $L^2(\mathbb{R}^3)$. Standard harmonic analysis approaches then use both symmetry and \eqref{eq:cancel} to investigate the behavior of the system's energy in view of well-posedness. However, \cite{tao2016} demonstrates that such an ansatz will not be successful: the author constructs an explicit, "averaged" version of \eqref{eq:projNSE} in which symmetry and cancellation property remain valid, and for which the corresponding system experiences a blow-up in finite time.\\
In recent years it has been investigated whether for ill-posed deterministic PDEs it is possible to construct a perturbation yielding higher regularity of the corresponding perturbed system. Especially in the case of stochastic perturbations the motive behind such constructions is that noise may have a smoothing effect. This goes under the name of regularization by noise and has been exploited in various settings; as an overview see e.g. \cite{flandoli2011} and \cite{gess2016}, or in the context of NSE e.g. \cite{bianchi2020}. A particularly interesting stochastic perturbation is given by a specific type of transport noise which in the context of the vorticity formulation of \eqref{eq:NSE} and similar, more general models proved to yield existence of solutions for arbitrarily long time with large probability, see \cite{flandoli2021a} and \cite{flandoli2021b}.\\
The averaged NSE from \cite{tao2016} form an excellent candidate to test the regularization effect of this noise: though it explodes in finite time, we want to exploit whether at least locally in time the model provides enough regularity in order for noise to delay the blow up. In \cite{flandoli2021b}, this can be achieved if the nonlinearity of the system admits for a continuity, growth and local monotonicity condition. It turns out that similar to the case of the standard NSE \eqref{eq:NSE}, we do not obtain local existence and uniqueness of solutions to the averaged NSE on the velocity level in the sense that these three conditions cannot be shown to hold true. In turn, we give a lower bound on the order of derivatives which allow for an analysis as in \cite{flandoli2021b}.\\
This note is therefore structured as follows: in Section \ref{sec:mainIngredients} we repeat the core results considered in this note, namely the averaged NSE from \cite{tao2016} as well as the regularization results from \cite{flandoli2021a} and \cite{flandoli2021b}. In particular we will formulate the result from \cite{flandoli2021b} in the case of divergence-free vector fields with the proof given in Appendix \ref{app:proofThmDivfree}. Note that regularization was shown to hold for systems on the torus, hence in Section \ref{sec:mainResults} we first show that the blow-up result in \cite{tao2016} carries over to the periodized averaged NSE (see Section \ref{subsec:periodTao}). The rest of the section then gives the violation results of the above three conditions as well as the lower bound on regularizable derivatives (see Section \ref{subsec:localW}).

\subsection{Notation}\label{subsec:notation}
Let $L^2(\mathbb{K}^d)$ denote the space of square-integrable functions on $\mathbb{K}^d$ with norm $\|\cdot\|_{L^2(\mathbb{K}^d)}$ where $\mathbb{K}^d = \mathbb{R}^d$ or $\mathbb{K}^d = \mathbb{T}^d := \mathbb{R}^d / \mathbb{Z}^d$. Further let $H^{\alpha}(\mathbb{K}^d), \alpha \in \mathbb{R},$ denote the Sobolev space endowed with norm $\|\cdot\|_{H^{\alpha}(\mathbb{K}^d)} := \left\|({\rm Id} -\Delta)^{\frac{\alpha}{2}}\cdot\right\|_{L^2(\mathbb{K}^d)}$. As already used in the introduction, we will further use $\langle \cdot, \cdot \rangle_{L^2(\mathbb{K}^d)}$ to denote the inner product in $L^2(\mathbb{K}^d)$, and $\langle \cdot , \cdot \rangle$ for the dual pairing of $H^{\alpha}(\mathbb{K}^d)$ and $H^{-\alpha}(\mathbb{K}^d)$. Furthermore let $H^{\alpha}_{\rm df}(\mathbb{K}^d)$ denote the space of divergence-free functions in $H^{\alpha}(\mathbb{K}^d)$. On the other hand, denoting $L^p(0,T;Z)$ the set of all $u:[0,T]\rightarrow Z$ in $L^p$ for some linear space $Z$ with norm $\|\cdot\|_Z$, we define the fractional Sobolev spaces by 
\[W^{\beta, p}(0,T;Z):= \left\{ u \in L^p(0,T;Z): \int_0^T \int_0^T \frac{\|u(t)-u(s)\|_Z^p}{|t-s|^{1+\beta p}}{\rm d}t {\rm d}s < \infty\right\}\]
for $\beta \in (0,1), p >1$. If it exists, we will denote by $\mathcal{F}_{\mathbb{K}^d}f$ the Fourier transform of a function $f$ on $\mathbb{K}^d$. Whenever it is clear from the context, we drop the $\mathbb{K}^d$ in the notation. Finally, let ${\rm supp}$ denote the support of a function, and let $x \lesssim y$ denote $x \leq C y$ for some constant $C>0$.

\subsection{Acknowledgements}
The author would like to express her deepest gratitude to Martina Hofmanov\'a for many fruitful discussions. This project has received funding from the European Research Council (ERC) under the European Union's Horizon 2020 research and innovation programme (grant agreement No. 949981).

\section{Main ingredients}\label{sec:mainIngredients}
\noindent
This section shall serve the purpose of bringing together the various objects considered in this work. It will mainly consist of repetitions of external work and will set the notation throughout.

\subsection{The averaged NSE}\label{subsec:avNSE}
The main idea in \cite{tao2016} is to construct a modification of the projected NSE \eqref{eq:projNSE} which on the one hand preserves energy as well as symmetry and cancellation property of the nonlinearity, and on the other hand experiences a blow-up. In order to do so, consider a suitable frequency decomposition of the projected NSE and "average out", i.e. eliminate a carefully selected choice of frequencies, resulting in a system of the following form
\begin{equation}\label{eq:avNSE}
	\begin{aligned}
		\partial_t u &= \Delta u +C(u,u),\\
		u(0,\cdot) &= u_0,
	\end{aligned}
\end{equation}
where $C$ is called a \textit{local cascade operator}, introduced below. Thus allowing only for localized frequency interactions gives rise to a concrete blow-up mechanism. Considering the subcritical case of mild solutions in the regularity class $H_{\rm df}^{10}(\mathbb{R}^3)$, the main result in \cite{tao2016} is as follows:
\begin{theorem}[cf. {\cite[Theorem 3.3]{tao2016}} ]\label{thm:blowupTao}
There exist a symmetric local cascade operator $C$ satisfying the cancellation property, and a divergence-free vector field $u_0$ such that there does not exist any global mild solution $u:[0,\infty)\rightarrow H_{\rm df}^{10}(\mathbb{R}^3)$ to \eqref{eq:avNSE}.
\end{theorem}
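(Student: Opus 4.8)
The plan is to follow the strategy of \cite{tao2016}: reduce the PDE \eqref{eq:avNSE} to an infinite system of ordinary differential equations by localizing in frequency, design the nonlinear coupling so that energy is transported toward higher and higher frequencies in a self-similar cascade, and arrange this transport to outrun dissipation within a finite time. Since $H^{10}_{\mathrm{df}}(\mathbb{R}^3)$ is subcritical, a unique maximal mild solution exists for suitable $u_0$, so it suffices to exhibit $C$ and $u_0$ for which the $H^{10}$ norm of this maximal solution diverges at a finite time.

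First I would set up the reduction. Choose a geometrically growing sequence of frequency scales $\lambda_n = \lambda^n$ ($\lambda$ large) and an associated family of unit-scale divergence-free building blocks $e_n$ concentrated in the shell $\{|\xi| \sim \lambda_n\}$. Writing a candidate solution as a superposition $u(t) = \sum_n a_n(t)\, e_n$, the requirement that $C$ be a \emph{local} cascade operator --- coupling only frequency-comparable modes --- collapses the nonlinear term into a quadratic coupling between neighbouring generations, so that the amplitudes satisfy an ODE of the schematic form $\dot a_n = -\lambda_n^2 a_n + (\text{quadratic in } a_{n-1},a_n,a_{n+1})$. The structural constraints on $C$ translate into algebraic constraints on the structure constants: divergence-freeness is built into the choice of the $e_n \in H^{10}_{\mathrm{df}}$, symmetry of $C$ becomes symmetry of the constants, and the cancellation property \eqref{eq:cancel} becomes the statement that the quadratic part conserves the energy $\sum_n a_n^2$. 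One must also ensure that the span of the $e_n$ is an invariant subspace for the flow, so that the ODE is an exact reduction and not merely a formal one.

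The core of the argument, and the main obstacle, is the construction of an abstract energy-conserving quadratic ``blow-up machine.'' Because cancellation forbids the nonlinearity from creating energy, blow-up cannot arise from amplification of the total $L^2$ energy (which in fact never increases); it must be produced purely by concentration, focusing a bounded amount of energy into ever higher frequencies so that high Sobolev norms such as $H^{10}$ diverge while $L^2$ stays bounded. Following \cite{tao2016}, I would assemble a finite-dimensional quadratic circuit --- from ``pump,'' ``delay'' and ``gate'' subsystems --- which over a bounded time interval transfers essentially all the energy from one mode to a designated higher-frequency output mode while respecting the cancellation identity. The genuinely hard point is to realize this circuit by an operator $C$ that is simultaneously symmetric, divergence-free, local, and of \emph{averaged-Euler} type, i.e. obtained from $B$ through the admissible averaging operations; the algebraic rigidity imposed by all these constraints at once is what makes the construction delicate.

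Finally I would iterate the machine across the scales $\lambda_n$. Exploiting the scaling symmetry $u(t,x) \mapsto \lambda u(\lambda^2 t, \lambda x)$ of \eqref{eq:avNSE}, the copy operating at scale $\lambda_n$ has characteristic running time $\sim \lambda_n^{-\sigma}$ for some $\sigma>0$, so that the total time $T = \sum_n \lambda_n^{-\sigma}$ is finite and the pulse visits infinitely many generations before $T$. Here one must verify that the nonlinear transfer rate dominates the dissipation rate $\lambda_n^2$ at each scale --- this is exactly where the subcriticality of $H^{10}$ and the freedom in choosing the amplitudes enter --- and that the output of one machine feeds the input of the next cleanly enough that the error terms neglected in the idealized reduction stay negligible. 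Combining the invariance of the cascade subspace with uniqueness of the maximal mild solution, the solution emanating from the appropriate $u_0$ is governed by this chained ODE, and its $H^{10}_{\mathrm{df}}$ norm becomes unbounded as $t \uparrow T < \infty$. Hence no global mild solution exists, which proves the theorem.
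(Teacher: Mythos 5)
Your proposal follows essentially the same route as the paper, which itself only summarizes Tao's argument rather than reproving it: a frequency-localized reduction of \eqref{eq:avNSE} to an infinite ODE system of the form \eqref{eq:dyadicX}, an energy-conserving quadratic ``circuit'' that concentrates a bounded amount of $L^2$ energy at ever higher frequencies, geometric scaling of the transfer times so that the total blow-up time is finite, and the observation that a global mild solution in $H_{\rm df}^{10}(\mathbb{R}^3)$ would force bounds (namely \eqref{eq:boundXin}) that the constructed cascade violates.

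One step of your sketch would fail if pursued literally: you require that the span of the building blocks $e_n$ be an invariant subspace of the flow ``so that the ODE is an exact reduction and not merely a formal one.'' This cannot be arranged. The nonlinearity can indeed be made to act exactly on this span --- that is how the local cascade operator $C$ is defined --- but the heat semigroup does not preserve it: $e^{t\Delta}$ preserves the Fourier support of each $\psi_{i,n}$, yet deforms the profile inside its frequency ball, so the solution does not remain a linear combination of the fixed profiles. Tao's proof, reflected in the paper's Lemma \ref{lem:Tao}, replaces exactness by an inexact reduction: one tracks both the coefficients $X_{i,n}=\langle u,\psi_{i,n}\rangle_{L^2}$ and the local energies $E_{i,n}=\frac{1}{2}\|u_{i,n}\|_{L^2}^2$, obtaining for $X_{i,n}$ the ODE \eqref{eq:evolXin} only up to error terms $O\left((1+\epsilon_0)^{2n}\sqrt{E_{i,n}}\right)$ coming from the dissipation, together with differential inequalities linking $E_{i,n}$ and $X_{i,n}$; the blow-up construction must then be shown robust under these errors. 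This bookkeeping is the part of the argument your outline folds into ``error terms stay negligible,'' but you attribute those errors to the chaining of machines, whereas they actually originate in the non-diagonal action of the dissipation on the frequency blocks --- which is precisely why the contradiction is run through the bounds of Lemma \ref{lem:Tao} rather than through an exact invariant-subspace ODE.
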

\noindent
The construction of such an operator $C$ in \cite{tao2016} is inspired by the work of \cite{katz2005} in case of the dyadic hyperdissipative NSE: heuristically, a solution $u$ to the projected NSE \eqref{eq:projNSE} can be approximated by a wavelet decomposition of the form
\begin{equation}\label{eq:waveletDecomp}
\sum_n u_n (t)\psi_n(x)
\end{equation}
for a suitable orthonormal basis $\{\psi_n\}$ in $L^2(\mathbb{R}^3)$, and the wavelet coefficients $u_n$ evolve according to the following ODEs
\begin{equation}\label{eq:dyadicX}
\partial_t u_n = 2^{\frac{5n}{2}} u_{n-1}^2 -2^{2 n}u_n - 2^{\frac{5(n+1)}{2}} u_n u_{n+1}.
\end{equation}
\noindent
Hence the corresponding energy equation reads
\begin{equation*}
\partial_t \left(\frac{1}{2} u_n^2\right) = 2^{\frac{5n}{2}} u_{n-1}^2u_n -2^{2n}X_n^2 - 2^{\frac{5(n+1)}{2}} u_n^2 u_{n+1},
\end{equation*}
\noindent
encoding a 'low-to-high-frequency-cascade': the energy from the previous scale $n-1$ enters scale $n$ and, apart from some dissipated portion, will be completely transported to the next scale $n+1$. This system experiences a blow-up in $H^{2+\epsilon}(\mathbb{R}^3)$ for small $\epsilon >0$ and dissipation exponent $\alpha <\frac{1}{4}$; however, in \cite{barbato2011} it has been shown that in the dissipation range containing the standard NSE, the corresponding model dissipates energy fast enough to prevent such a blow-up. The construction in \cite{tao2016} allows for a decomposition \eqref{eq:waveletDecomp} in such a way that the system of coefficients $u_n$ captures an additional time delay in which energy first accumulates at one scale and is then abruptly transported to the next. This way the energy cascade outruns the dissipation and yields a blow-up in finite time. Let us now give the precise formulation of an operator $C$ enabling such behaviour:
\begin{definition}[cf. {\cite[Section 4]{tao2016}}]\label{def:cascadeOp}
Let $\epsilon_0 \in (0,1)$ and $m\in \mathbb{N}$. Furthermore let $B_1,...,B_m$ be balls in the annulus $\{\xi \in \mathbb{R}^3: 1 < |\xi|<1+\frac{\epsilon_0}{2}\}$ such that $B_1,...,B_m,-B_1,...,-B_m$ are disjoint. For $n\in \mathbb{Z}$ and $i\in\{1,...,m\}$, let $\psi_{i,n}:\mathbb{R}^3\rightarrow \mathbb{R}^3$ be rescaled $L^2$-functions with
\[\psi_{i,n}(\cdot) := (1+\epsilon_0)^{\frac{3n}{2}}\psi_i((1+\epsilon_0)^n \cdot)\]
where $\psi_i \in H_{\rm df}^{10}(\mathbb{R}^3)$ are Schwartz functions with Fourier transform supported on $B_i \cup -B_i$ and normalized to $\|\psi_i\|_{L^2}=1$. Let $S=\{(0,0,0),(1,0,0),(0,1,0),(0,0,1)\}$ and $\alpha_{i_1,i_2,i_3,\mu_1,\mu_2,\mu_3}\in \mathbb{R}$ be bounded structure constants where $i_1,i_2,i_3 \in \{1,...,m\}$ and $(\mu_1,\mu_2,\mu_3)\in S$. Then a \emph{local cascade operator} $C:H_{\rm df}^{10}(\mathbb{R}^3) \times H_{\rm df}^{10}(\mathbb{R}^3) \rightarrow H_{\rm df}^{-10}(\mathbb{R}^3)$ is defined by
\begin{equation}
\begin{aligned}
C(u,v) := &\sum_{n \in \mathbb{Z}}\sum_{(i_1,i_2,i_3,\mu_1,\mu_2,\mu_3)\in\{1,...,m\}^3\times S}\alpha_{i_1,i_2,i_3,\mu_1,\mu_2,\mu_3} (1+\epsilon_0)^{\frac{5n}{2}}\\
&\qquad \langle u, \psi_{i_1,n+\mu_1}\rangle_{L^2} \langle v, \psi_{i_2,n+\mu_2} \rangle_{L^2} \psi_{i_3,n+\mu_3}
\end{aligned}
\end{equation}
for $u,v \in H_{\rm df}^{10}(\mathbb{R}^3)$.
\end{definition}
\begin{remark}
	In the following we will rather use the short hand notation $\sum_{n,i,\mu}$ as well as $\alpha_{i,\mu}$.
\end{remark}
\noindent
As done in \cite{tao2016}, requiring the symmetry condition
\[\alpha_{i_1,i_2,i_3,\mu_1,\mu_2,\mu_3} = \alpha_{i_2,i_1,i_3,\mu_2,\mu_1,\mu_3}\]
as well as the cancellation condition
\[\sum_{\{a,b,c\}=\{1,2,3\}} \alpha_{i_a,i_b,i_c,\mu_a,\mu_b,\mu_c} =0\]
for all $i_1,i_2,i_3 \in \{1,...,m\}$ and $(\mu_1,\mu_2,\mu_3)\in S$, this ensures that $C$ is symmetric and satisfies the cancellation property
\[\langle C(u,u),u\rangle_{L^2}=0\quad \forall u \in H_{\rm df}^{10}(\mathbb{R}^3).\]
Next consider the corresponding Cauchy problem
\begin{equation}
	\begin{aligned}
		\partial_t u &= \Delta u +C(u,u),\\
		u(0,\cdot) &= u_0 := \psi_{1,n_0},
	\end{aligned}
\end{equation}
for some $n_0 \in \mathbb{N}$ sufficiently large, and assume that there exists a mild solution $u:[0,\infty) \rightarrow H_{\rm df}^{10}(\mathbb{R}^3)$. Then the following holds:
\begin{lemma}[cf. {\cite[Lemma 4.1]{tao2016}}]\label{lem:Tao}
	For each $n\in\mathbb{Z}, t \geq 0$ and $i\in\{1,...,m\}$ define
	\begin{align*}
		\left(\mathcal{F}_{\mathbb{R}^3}u_{i,n}(t)\right)(\xi) &:= \left(\mathcal{F}_{\mathbb{R}^3}u(t)\right)(\xi)\mathds{1}_{\{ \xi \in (1+\epsilon_0)^n(B_i \cup -B_i)\}},\\
		X_{i,n}(t) &:= \langle u(t), \psi_{i,n} \rangle_{L^2} = \langle u_{i,n}(t),\psi_{i,n}\rangle_{L^2},\\
		E_{i,n} &:= \frac{1}{2} \|u_{i,n}(t)\|_{L^2}^2,
	\end{align*}
	then
	\begin{enumerate}
		\item it holds
		\begin{equation}\label{eq:boundXin}
			\sup_{t\in[0,T]}\sup_{n\in\mathbb{Z}}\sup_{i\in\{1,...,m\}} \left(1+(1+\epsilon_0)^{10n}\right)|X_{i,n}(t)|<\infty
		\end{equation}
		and
		\begin{equation}
			\sup_{t\in[0,T]}\sup_{n\in\mathbb{Z}}\sup_{i\in\{1,...,m\}} \left(1+(1+\epsilon_0)^{10n}\right)|E_{i,n}(t)|<\infty
		\end{equation}
		for all $T \in(0,\infty)$,
		\item for any $n \in \mathbb{Z}, i\in\{1,...,m\}$
		\begin{align}
			E_{i,n}(0) &= \frac{1}{2}X_{i,n}(0)^2,\\
			X_{i,n}(0) &= \mathds{1}_{\{(i,n)=(1,n_0)\}},
		\end{align}
		\item for any $n \in \mathbb{Z}, i\in\{1,...,m\}$
		\begin{equation}\label{eq:evolXin}
			\begin{aligned}
				\partial_t X_{i,n} &= \sum_{i_1,i_2\in\{1,...,m\}}\sum_{\mu\in S} \alpha_{i_1,i_2,i,\mu} (1+\epsilon_0)^{\frac{5(n-\mu_3)}{2}} X_{i_1,n-\mu_3+\mu_1}X_{i_2,n-\mu_3+\mu_2}\\
				&\hspace{0,5cm} + O\left((1+\epsilon_0)^{2n}\sqrt{E_{i,n}}\right)
			\end{aligned}
		\end{equation}
		and
		\begin{equation}
			\partial_t E_{i,n} \leq \sum_{i_1,i_2\in\{1,...,m\}}\sum_{\mu \in S} \alpha_{i_1,i_2,i,\mu} (1+\epsilon_0)^{\frac{5(n-\mu_3)}{2}} X_{i_1,n-\mu_3+\mu_1}X_{i_2,n-\mu_3+\mu_2}X_{i,n},
		\end{equation}
		\item for any $n \in \mathbb{Z}, i\in\{1,...,m\}$
		\begin{equation}
			\frac{1}{2}X_{i,n}(t)^2 \leq E_{i,n}(t) \leq \frac{1}{2}X_{i,n}(t)^2 + O\left((1+\epsilon_0)^{2n}\int_0^t E_{i,n}(s){\rm d}s\right),
		\end{equation}
		\item it holds
		\begin{equation}\label{eq:initCond}
			X_{i,n}(t)=0=E_{i,n}(t)
		\end{equation}
		for all $n<n_0, i\in\{1,...,m\}, t\geq 0$.
	\end{enumerate}
\end{lemma}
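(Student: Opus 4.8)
\emph{Proof proposal.} The plan is to derive all five items from two structural facts. First, the rescaled profiles $\{\psi_{i,n}\}$ form an orthonormal system in $L^2(\mathbb{R}^3)$ whose Fourier supports $(1+\epsilon_0)^n(B_i\cup-B_i)$ are pairwise disjoint; disjointness at fixed $n$ is part of the hypothesis, while disjointness across scales follows from $1+\tfrac{\epsilon_0}{2}<1+\epsilon_0$, so the outer radius $(1+\epsilon_0)^n(1+\tfrac{\epsilon_0}{2})$ of the band at scale $n$ lies strictly below the inner radius $(1+\epsilon_0)^{n+1}$ at scale $n+1$. Second, $u_{i,n}=P_{i,n}u$ is the $L^2$-orthogonal projection of $u$ onto that band, where $P_{i,n}$ is the associated Fourier multiplier. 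Since the rescaling preserves the $L^2$-norm we have $\|\psi_{i,n}\|_{L^2}=1$, and since $\psi_{i,n}$ is supported in the band, Plancherel gives $X_{i,n}=\langle u,\psi_{i,n}\rangle_{L^2}=\langle u_{i,n},\psi_{i,n}\rangle_{L^2}$ as stated. I would use Plancherel and these supports throughout.

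For item (1) I would note $|X_{i,n}(t)|\le\|u_{i,n}(t)\|_{L^2}$ and, since $|\xi|\ge(1+\epsilon_0)^n$ on the band, estimate $(1+\epsilon_0)^{20n}\|u_{i,n}\|_{L^2}^2\le\int|\xi|^{20}|\widehat{u}|^2\,{\rm d}\xi\le\|u\|_{H^{10}}^2$ and likewise $(1+\epsilon_0)^{10n}\|u_{i,n}\|_{L^2}^2\le\|u\|_{H^{5}}^2$; together with the trivial $\|u_{i,n}\|_{L^2}\le\|u\|_{L^2}$ this controls the two weighted suprema by $\sup_{t\le T}\|u(t)\|_{H^{10}}$, which is finite on $[0,T]$ by the assumed $H^{10}_{\rm df}(\mathbb{R}^3)$-regularity of the mild solution. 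For item (2), orthogonality gives $X_{i,n}(0)=\langle\psi_{1,n_0},\psi_{i,n}\rangle_{L^2}=\mathds{1}_{\{(i,n)=(1,n_0)\}}$, while projecting the single-band datum $u_0=\psi_{1,n_0}$ yields $u_{i,n}(0)=\psi_{1,n_0}$ if $(i,n)=(1,n_0)$ and $0$ otherwise, so $E_{i,n}(0)=\tfrac12 X_{i,n}(0)^2$.

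Item (3) is the engine. Testing the equation against the smooth, band-limited $\psi_{i,n}$ gives $\partial_t X_{i,n}=\langle\Delta u,\psi_{i,n}\rangle_{L^2}+\langle C(u,u),\psi_{i,n}\rangle_{L^2}$. For the dissipative term, band-limitation yields $|\langle\Delta u,\psi_{i,n}\rangle_{L^2}|=|\langle\Delta u_{i,n},\psi_{i,n}\rangle_{L^2}|\lesssim(1+\epsilon_0)^{2n}\|u_{i,n}\|_{L^2}\lesssim(1+\epsilon_0)^{2n}\sqrt{E_{i,n}}$, the stated remainder. Inserting the definition of $C$ and using $\langle\psi_{i_3,n'+\mu_3},\psi_{i,n}\rangle_{L^2}=\mathds{1}_{\{i_3=i\}}\mathds{1}_{\{n'+\mu_3=n\}}$ collapses the triple sum to the terms with $i_3=i$ and $n'=n-\mu_3$, producing exactly \eqref{eq:evolXin}. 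For $E_{i,n}$ I would apply $P_{i,n}$ to the equation (it commutes with $\partial_t$ and $\Delta$), pair with $u_{i,n}$, drop the nonpositive $\langle\Delta u_{i,n},u_{i,n}\rangle_{L^2}$, and note that $\langle P_{i,n}C(u,u),u_{i,n}\rangle_{L^2}=\langle C(u,u),u_{i,n}\rangle_{L^2}$ collapses by the same orthogonality (using $\langle\psi_{i,n},u_{i,n}\rangle_{L^2}=X_{i,n}$) to the stated cascade sum times $X_{i,n}$.

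For item (4) the lower bound is Cauchy--Schwarz, $X_{i,n}^2\le\|u_{i,n}\|_{L^2}^2=2E_{i,n}$. For the upper bound I would differentiate $E_{i,n}-\tfrac12 X_{i,n}^2$: the cascade contributions to $\partial_t E_{i,n}$ and to $X_{i,n}\partial_t X_{i,n}$ coincide and cancel, leaving $\langle\Delta u_{i,n},u_{i,n}\rangle_{L^2}-\langle\Delta u_{i,n},\psi_{i,n}\rangle_{L^2}X_{i,n}$, each summand bounded by $(1+\epsilon_0)^{2n}\|u_{i,n}\|_{L^2}^2\lesssim(1+\epsilon_0)^{2n}E_{i,n}$; integrating from the vanishing datum of item (2) gives the claim. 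Finally item (5), the persistence of the vacuum below scale $n_0$, is the main obstacle. Here I would run a Gr\"onwall argument on $\Phi(t):=\sum_{n<n_0}\sum_{i}E_{i,n}(t)$, which is finite since $\Phi\le\tfrac12\|u\|_{L^2}^2$ by disjointness of the bands and satisfies $\Phi(0)=0$. The combinatorial heart is that $S$ contains only $0$ and standard basis vectors, so in each product $X_{i_1,n-\mu_3+\mu_1}X_{i_2,n-\mu_3+\mu_2}$ at least one scale index is $\le n<n_0$; combined with the factor $X_{i,n}$ itself (also at a scale below $n_0$), every source term in $\partial_t E_{i,n}$ carries at least two such low-scale factors. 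Bounding the remaining factor by item (1), using $|X_{j,k}|\le\sqrt{2E_{j,k}}$, and exploiting that the weights $(1+\epsilon_0)^{5(n-\mu_3)/2}$ stay bounded for $n<n_0$ and decay as $n\to-\infty$, I expect to close $\partial_t\Phi\lesssim\Phi$ with a constant depending on $T$, whence $\Phi\equiv0$ and therefore $X_{i,n}\equiv E_{i,n}\equiv0$ for $n<n_0$. I anticipate the genuine difficulties to be the rigorous time-differentiation of $X_{i,n}$ and $E_{i,n}$ for a merely mild solution, to be justified through the Duhamel formulation tested against the smooth $\psi_{i,n}$, and the bookkeeping in item (5) guaranteeing two low factors together with summability over the infinitely many scales below $n_0$.
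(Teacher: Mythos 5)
Your proposal is correct and follows essentially the same route as the paper: the paper does not reprove this lemma itself (it cites Tao's Lemma 4.1), but its proof of the periodic analogue in Section 3.1 rests on exactly the ingredients you use --- $L^2$-orthonormality of the $\psi_{i,n}$ coming from disjoint Fourier supports across indices and scales, Plancherel, frequency-band estimates for the weighted bounds in item (1), testing the equation against $\psi_{i,n}$, and collapsing the cascade sum by orthogonality in item (3). Your arguments for items (4) and (5) --- differentiating $E_{i,n}-\tfrac12 X_{i,n}^2$ so the cascade terms cancel, and the Gr\"onwall argument on $\sum_{n<n_0}\sum_i E_{i,n}$ exploiting that every source term below scale $n_0$ carries at least two low-scale factors --- are sound and correspond to the part of the proof the paper defers to \cite{tao2016}, with your acknowledged technical points (Duhamel justification of the time derivatives, interchange of sum and integration over the scales $n<n_0$) being exactly the right ones to address.
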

\noindent
Observe that up to the $O$-terms, \eqref{eq:evolXin} is of the form \eqref{eq:dyadicX} and existence of a global mild solution $u$ implies boundedness of the $X_{i,n}$ as formalized in \eqref{eq:boundXin}. Thus in order to prove Theorem \ref{thm:blowupTao}, the author constructs a sequence $(X_{i,n})$ violating Lemma \ref{lem:Tao} which specifies a blow-up in $H_{\rm df}^{10}$ (cf. \cite[Theorem 4.2]{tao2016} as well as the construction in \cite[Section 6]{tao2016}).

\subsection{Regularization by transport noise}\label{subsec:noise}
In this section, we shall introduce the specific choice of transport noise (cf. \cite[Section 2]{flandoli2021a} and \cite[Section 1.2]{flandoli2021b}): on the $d$-dimensional torus $\mathbb{T}^d$ consider the following noise
\begin{equation}\label{eq:noise}
\frac{\sqrt{C_d \nu}}{\|\theta\|_{\ell^2}}\sum_{k \in \mathbb{Z}_0^d}\sum_{i=1}^{d-1}\theta_k \Pi((\sigma_{k,i}\cdot\nabla)\cdot)W^{k,i}.
\end{equation}
Here $C_d =\frac{d}{d-1}, d \geq 2, \nu >0$, $\Pi$ is the Leray projection and the individual components are as follows: let $\ell^2 = \ell^2(\mathbb{Z}_0^d)$ denote the space of square-summable sequences indexed by $\mathbb{Z}_0^d=\mathbb{Z}^d\setminus\{0\}$ and choose a sequence $\theta=(\theta_k)_{k \in \mathbb{Z}_0^d} \in \ell^2$ with finitely many non-zero components such that $\theta$ satisfies a symmetry condition
\[ \theta_k = \theta_l \quad \forall k, l \in \mathbb{Z}_0^d,\quad |k|=|l|.\]
Further, let $\{ \sigma_{k,i} : k \in \mathbb{Z}_0^d, i=1,...,d-1\}$ be periodic divergence-free smooth vector fields forming a complex orthonormal system of the space
\begin{equation}\label{eq:HC}
H_{\mathbb{C}}=\left\{v \in L^2(\mathbb{T}^d,\mathbb{C}^d):\quad \int_{\mathbb{T}^d} v {\rm d} x = 0, {\rm div} v =0\right\}
\end{equation}
and which are defined as follows:
\[\sigma_{k,i}(x) = a_{k,i}e^{2\pi{\rm i}k\cdot x}, \quad x \in \mathbb{T}^d, k\in\mathbb{Z}_0^d, i=1,...,d-1.\]
Here ${\rm i}$ denotes the imaginary unit and considering a partition $\mathbb{Z}_{+}^d, \mathbb{Z}_{-}^d$ of $\mathbb{Z}_0^d$ such that $\mathbb{Z}_0^d = \mathbb{Z}_{+}^d \cup \mathbb{Z}_{-}^d, \mathbb{Z}_{+}^d = - \mathbb{Z}_{-}^d$, choose for any $k \in \mathbb{Z}_{+}^d$ the set $\{a_{k,i}:i=1,...,d-1\}$ to be an ONB of $k^{\perp} := \{y \in \mathbb{R}^d: y \cdot k =0\}$, and define $a_{k,i} = a_{-k,i}$ for any $k \in \mathbb{Z}_{-}^d$.\\
Finally let $\{W^{k,i}:k \in \mathbb{Z}_0^d,i=1,...,d-1\}$ be a family of complex Brownian motions on a probability space $(\Omega, \mathcal{F},\mathbb{P})$ such that
\begin{equation}\label{eq:Wcomplconj}
	\overline{W^{k,i}} = W^{-k,i}
 \end{equation}
and their cross-variation satisfies
\begin{equation}\label{eq:Wcov}
	\left[W^{k,i},W^{l,j}\right]_t = 2t \delta_{k+l}\delta_{i-j} \quad \forall k,l \in \mathbb{Z}_0^d, i,j \in \{1,...,d-1\}
\end{equation}
in order for $W^{k,i}$ and $W^{l,j}$ to be independent whenever $k \neq \pm l$ and $i \neq j$.\\
\textbf{Example}: In \cite{flandoli2021a}, the authors consider a family $\{B^{k,i}:k \in \mathbb{Z}_0^d,i=1,...,d-1\}$ of standard real-valued Brownian motions and define for $k \in \mathbb{Z}_{+}^d$
\[ W^{k,i} := B^{k,i} +{\rm i}B^{-k,i}\]
and for $k \in \mathbb{Z}_{-}^d$
\[ W^{k,i} := B^{k,i} -{\rm i}B^{-k,i}.\]
It is easy to check that $\{W^{k,i}:k \in \mathbb{Z}_0^d, i=1,...,d-1\}$ then satisfy \eqref{eq:Wcomplconj} and \eqref{eq:Wcov}.

\subsubsection{The vorticity formulation of NSE}\label{subsubsec:vortNSE}
For $d=3$, the vorticity $\xi := \nabla \times u$ of the standard NSE \eqref{eq:NSE} evolves according to
\begin{equation}
\partial_t \xi +\mathcal{L}_u \xi =\Delta \xi
\end{equation}
with Lie derivative $\mathcal{L}_u \xi = (u\cdot \nabla)\xi - (\xi \cdot \nabla)u$ consisting of a transport and a vortex stretching term, respectively. As discussed in \cite{flandoli2021a}, we may heuristically recover the form of noise \eqref{eq:noise} here when separating the vorticity into large-scale and small-scale component and treating the later as a random perturbation of the former. The small-scale vortex stretching term, however, complicates the regularization-by-noise analysis but it is shown in \cite{flandoli2021a} that the transport term on its own already has sufficient regularization skills. More precisely, let $B_H(R_0)$ denote the ball of radius $R_0$ in the real subspace $H$ of $H_{\mathbb{C}}$, then the authors of \cite{flandoli2021a} are able to show the following result:
\begin{theorem}[cf. {\cite[Corollary 1.5]{flandoli2021a}}]
For $R_0 >0$, $T>0$, and $\epsilon >0$, there exists $\theta \in \ell^2$ such that for all $\xi_0 \in B_H(R_0)$
\begin{equation}\label{eq:regFL}
	{\rm d} \xi +\mathcal{L}_u \xi {\rm d}t = \Delta \xi {\rm d}t + \frac{\sqrt{C_3 \nu}}{\|\theta\|_{\ell^2}}\sum_{k \in \mathbb{Z}_0^3}\sum_{i=1}^{3}\theta_k \Pi((\sigma_{k,i}\cdot\nabla)\xi)\circ {\rm d}W^{k,i}
\end{equation}
admits a unique strong solution up to time $T$ with probability no less than $1-\epsilon$.
\end{theorem}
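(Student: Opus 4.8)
The plan is to reduce the Stratonovich SPDE \eqref{eq:regFL} to an It\^o equation with \emph{enhanced dissipation}, and then to show, via a quantitative scaling-limit argument, that for a suitable high-frequency choice of $\theta$ the stochastic solution stays uniformly close to a well-behaved deterministic profile up to time $T$. First I would rewrite the transport noise in It\^o form. Computing the It\^o--Stratonovich corrector for $\sum_{k,i}\theta_k\,\Pi((\sigma_{k,i}\cdot\nabla)\,\cdot\,)\,\mathrm{d}W^{k,i}$ and using the covariance \eqref{eq:Wcov}, the explicit form $\sigma_{k,i}(x)=a_{k,i}e^{2\pi\mathrm{i}k\cdot x}$, the identity $\sum_{i=1}^{d-1}a_{k,i}\otimes a_{k,i}=\mathrm{Id}-\hat k\otimes\hat k$ on $k^{\perp}$, and the symmetry $\theta_k=\theta_l$ for $|k|=|l|$, the corrector collapses to a constant multiple of the Leray-projected Laplacian; the normalization $C_d=\frac{d}{d-1}$ together with the prefactor $\|\theta\|_{\ell^2}^{-2}\sum_k\theta_k^2=1$ is chosen precisely so that this multiple equals $\nu$. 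Hence \eqref{eq:regFL} is equivalent to the It\^o equation
\begin{equation*}
	\mathrm{d}\xi+\mathcal{L}_u\xi\,\mathrm{d}t=(1+\nu)\Delta\xi\,\mathrm{d}t+\frac{\sqrt{C_3\nu}}{\|\theta\|_{\ell^2}}\sum_{k,i}\theta_k\,\Pi((\sigma_{k,i}\cdot\nabla)\xi)\,\mathrm{d}W^{k,i},
\end{equation*}
whose drift carries the \emph{total} viscosity $1+\nu$ while the remaining stochastic integral is a genuine (It\^o) martingale.

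Second, I would set up local well-posedness and an a priori energy balance. A Galerkin approximation in the divergence-free framework produces, for each $\xi_0\in B_H(R_0)$, a unique local strong solution up to a blow-up time $\tau^{*}$; the nonlinearity $\mathcal{L}_u\xi=(u\cdot\nabla)\xi-(\xi\cdot\nabla)u$ meets the continuity, growth and local-monotonicity requirements needed for this. Applying It\^o's formula to $\|\xi\|_{H^{s}}^{2}$ for a suitable $s$ and exploiting that the transport noise is energy-preserving in Stratonovich form (so that only the corrector, already absorbed into $(1+\nu)\Delta$, survives in the drift), I obtain a balance of the schematic form
\begin{equation*}
	\mathrm{d}\|\xi\|_{H^{s}}^{2}\le\big(-2(1+\nu)\|\xi\|_{H^{s+1}}^{2}+\langle \mathcal{L}_u\xi,\xi\rangle_{H^s}\big)\,\mathrm{d}t+\mathrm{d}M_t,
\end{equation*}
where the quadratic variation of the martingale $M_t$ is controlled, through \eqref{eq:Wcov}, by the ratio $\|\theta\|_{\ell^\infty}/\|\theta\|_{\ell^2}$.

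Third comes the probabilistic core. I would choose $\theta=\theta^{(N)}$ supported on increasingly high frequency shells, normalized so that $\|\theta^{(N)}\|_{\ell^2}=1$ while $\|\theta^{(N)}\|_{\ell^\infty}/\|\theta^{(N)}\|_{\ell^2}\to0$; by the Burkholder--Davis--Gundy inequality this forces $\mathbb{E}\big[\sup_{t\le T}|M_t|\big]\to0$. Simultaneously the effective viscosity $1+\nu$ is taken large relative to $R_0$, so that the deterministic enhanced-dissipation equation (the $N\to\infty$ limit) admits a global solution remaining inside $B_H(2R_0)$ on $[0,T]$: for data small relative to the viscosity the dissipative term dominates the vortex-stretching nonlinearity. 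A stopping-time and stochastic-Gronwall argument then transfers this to the SPDE. Defining $\tau_K=\inf\{t:\|\xi(t)\|_{H^s}>K\}$ with $K\simeq 2R_0$, the a priori balance shows the drift cannot by itself drive $\|\xi\|_{H^s}$ past $K$ before $T$, while the fluctuation estimate makes the event that the martingale does so have probability at most $\epsilon$; hence $\mathbb{P}(\tau_K\ge T)\ge1-\epsilon$, and on this event the solution exists, is unique and stays bounded up to $T$.

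The main obstacle is the treatment of the vortex-stretching term $(\xi\cdot\nabla)u$, which is exactly the mechanism behind three-dimensional blow-up: the pairing $\langle\mathcal{L}_u\xi,\xi\rangle_{H^s}$ is supercritical and must be absorbed into the enhanced dissipation by delicate interpolation and Sobolev estimates, which is where the largeness of $1+\nu$ relative to $R_0$ is genuinely used and which dictates how high the frequencies in $\theta^{(N)}$ must reach. Making the deterministic closeness and the martingale smallness combine \emph{quantitatively} into the single probability bound $1-\epsilon$ — rather than a mere convergence in probability — is the second delicate point, requiring the Gronwall and maximal inequalities to be tracked with explicit constants depending only on $R_0$, $T$ and $\epsilon$.
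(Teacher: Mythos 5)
Your reduction in the first step is where the argument breaks. For the \emph{projected} noise $\Pi((\sigma_{k,i}\cdot\nabla)\xi)$ appearing in \eqref{eq:regFL}, the It\^o--Stratonovich corrector is
\[ S_\theta(\xi) = \frac{C_3\nu}{\|\theta\|_{\ell^2}^2}\sum_{k,i}\theta_k^2\,\Pi\bigl((\sigma_{k,i}\cdot\nabla)\Pi((\sigma_{-k,i}\cdot\nabla)\xi)\bigr), \]
and the inner Leray projection destroys the algebraic collapse you invoke: the identity $\sum_i a_{k,i}\otimes a_{k,i} = {\rm Id}-\hat k\otimes\hat k$ together with the shell symmetry of $\theta$ yields the exact identity ``corrector $=\nu\Delta$'' only for the \emph{unprojected} noise. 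With the projection, $S_\theta$ is a genuinely nonlocal operator for every finite $\theta$; it is only along the special sequence $\theta^N_k=|k|^{-\lambda}\mathds{1}_{\{N\le|k|\le 2N\}}$ and in the limit $N\to\infty$ that $S_{\theta^N}(\xi)\to\frac{3}{5}\nu\Delta\xi$ (Theorem 5.1 in Flandoli--Luo; note the constant is $\frac{3}{5}\nu$, not $\nu$ --- the loss is precisely the price of the projection). So there is no finite admissible $\theta$ for which \eqref{eq:regFL} ``is equivalent to'' an It\^o equation with total viscosity $1+\nu$, and your subsequent absorption of the vortex stretching into enhanced dissipation has nothing to act on.

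The second, more structural problem is your schematic energy balance. In the It\^o formulation, It\^o's formula for $\|\xi\|_{L^2}^2$ produces the drift term $2\langle S_\theta(\xi),\xi\rangle$ \emph{and} the quadratic-variation term $\frac{C_3\nu}{\|\theta\|_{\ell^2}^2}\sum_{k,i}\theta_k^2\|\Pi((\sigma_{k,i}\cdot\nabla)\xi)\|_{L^2}^2$, and these cancel exactly (this cancellation is recorded explicitly in Appendix A of the present paper). Pathwise, the stochastic equation therefore obeys the \emph{same} energy inequality as the deterministic one: there is no enhanced dissipation visible in any pathwise estimate, at any regularity. You cannot simultaneously put the corrector's dissipation into the drift and treat the martingale as a small perturbation, because the bracket of that martingale \emph{is} the corrector. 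The actual mechanism (the paper's sketch, following Flandoli--Luo) is a weak-convergence one: choose $\theta^N$ as above so that, in the scaling limit, the martingale part vanishes and $S_{\theta^N}\to\frac{3}{5}\nu\Delta$; show that solutions of the (cut-off) SPDE converge \emph{in probability}, in a weaker topology, to the unique solution of the deterministic equation $\partial_t\xi + \mathcal{L}_u\xi = (1+\frac{3}{5}\nu)\Delta\xi$; use that for $\nu$ large relative to $R_0$ and $T$ this limit equation is globally well-posed; and deduce the bound $1-\epsilon$ from convergence in probability, removing the cut-off on the resulting high-probability event. A stopping-time/stochastic-Gronwall argument of the kind you propose cannot substitute for this limit procedure, precisely because of the exact cancellation above. (A minor further point: the continuity, growth and local-monotonicity hypotheses you invoke for $\mathcal{L}_u\xi$ are exactly the conditions this paper shows \emph{fail} for Navier--Stokes-type nonlinearities; the vorticity equation is handled in Flandoli--Luo by bespoke estimates, not by that abstract framework.)
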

\noindent
For the proof rewrite the Stratonovich equation \eqref{eq:regFL} into its corresponding It\^{o}-formulation which by the above choice of parameters is of the form
\begin{equation}\label{eq:FLvortIto}
	{\rm d} \xi +\mathcal{L}_u \xi {\rm d}t = \left(\Delta \xi + S_{\theta}(\xi)\right){\rm d}t + \frac{\sqrt{C_3 \nu}}{\|\theta\|_{\ell^2}}\sum_{k \in \mathbb{Z}_0^3}\sum_{i=1}^{3}\theta_k \Pi((\sigma_{k,i}\cdot\nabla)\xi) {\rm d}W^{k,i}
\end{equation}
with It\^{o}-Stratonovich correction denoted by $S_{\theta}(\xi)$. Then they show that there exists a suitable choice of sequence $(\theta^N)_{N \in \mathbb{N}}$ such that in a suitable sense specified in \cite{flandoli2021a}, in the limit of $N\rightarrow \infty$ the martingale part in \eqref{eq:FLvortIto} vanishes and
\[ \lim_{N \rightarrow \infty} S_{\theta^N}(\xi) = \frac{3}{5}\nu\Delta \xi.\]
Hence obtain the limiting equation
\begin{equation}\label{eq:FLlimit}
	\partial_t \xi + \mathcal{L}_u \xi = \left(1+\frac{3}{5}\nu\right)\Delta \xi
\end{equation}
and the claim then follows by using existence of a unique global strong solution to \eqref{eq:FLlimit} for large enough $\nu$.

\subsubsection{Criteria for delayed blow-up}\label{subsubsec:criteriaBlowUp}
In the case of $\mathbb{T}^d$, $d \geq 2$, consider systems of more general form, namely
\begin{equation}\label{eq:generalDet}
\begin{aligned}
	\partial_t u &= -(-\Delta)^{\alpha} u + F(u),\\
	u(0,\cdot) &= u_0,
\end{aligned}
\end{equation}
for $\alpha \geq 1$ and a fixed initial condition $u_0 \in  L^2(\mathbb{T}^d)$. Regularization by transport noise is obtained under the following structural assumptions:
\begin{itemize}
	\item[(H1)] \textit{Continuity}: There exist $\beta_1\geq 0$ and $\eta\in(0,\alpha)$ such that $F:H^{\alpha - \eta} (\mathbb{T}^d)\rightarrow H^{-\alpha}(\mathbb{T}^d)$ is continuous and
	\begin{equation*}
		\|F(u)\|_{H^{-\alpha}(\mathbb{T}^d)} \lesssim \left(1+\|u\|_{L^2(\mathbb{T}^d)}^{\beta_1}\right)\left(1+\|u\|_{H^{\alpha}(\mathbb{T}^d)}\right).
	\end{equation*}
	\item[(H2)] \textit{Growth}: There exist $\beta_2 \geq 0$ and $\gamma_2 \in (0,2)$ such that
	\begin{equation*}
		|\langle F(u),u\rangle| \lesssim \left(1+\|u\|_{L^2(\mathbb{T}^d)}^{\beta_2}\right)\left(1+\|u\|_{H^{\alpha}(\mathbb{T}^d)}^{\gamma_2}\right).
	\end{equation*}
	\item[(H3)] \textit{Local monotonicity}: There exist $\beta_3 , \kappa \geq 0$, $\gamma_3 \in (0,2)$ such that $\beta_3 + \gamma_3 \geq 2$, $\kappa + \gamma_3 \leq 2$ and
	\begin{equation*}
		\begin{aligned}
			&|\langle u-v, F(u) - F(v) \rangle | \\
			&\lesssim \|u-v\|_{L^2(\mathbb{T}^d)}^{\beta_3} \|u-v\|_{H^{\alpha}(\mathbb{T}^d)}^{\gamma_3}\left(1+\|u\|_{H^{\alpha}(\mathbb{T}^d)}^{\kappa} + \|v\|_{H^{\alpha}(\mathbb{T}^d)}^{\kappa}\right).
		\end{aligned}
	\end{equation*}
	\item[(H4)] \textit{Admissible initial conditions}: There exists $\mathcal{K} \subset L^2(\mathbb{T}^d)$ convex, closed and bounded with the following property: for any $T>0$, we can find $\nu >0$ big enough such that the deterministic Cauchy problem
	\begin{equation}\label{eq:cauchyFGL}
	\begin{aligned}
		\partial_t u&= -(-\Delta)^{\alpha}u + \nu\Delta u +F(u),\\
		u(0,\cdot)&=u_0,
	\end{aligned}
	\end{equation}
	admits a global solution $u:=u(\cdot; u_0, \nu) \in L^2(0,T; H^{\alpha}(\mathbb{T}^d))\cap C([0,T]; L^2(\mathbb{T}^d))$ for any $u_0 \in \mathcal{K}$, and moreover
	\begin{equation}
		\sup_{u_0 \in \mathcal{K}} \sup_{t \in [0,T]} \|u(t; u_0, \nu)\|_{L^2(\mathbb{T}^d)} < \infty.
	\end{equation}
\end{itemize}
Given a deterministic $u_0 \in L^2(\mathbb{T}^d)$, let $\tau(u_0, \nu, \theta)$ denote the random maximal time of existence of solutions $u(t; u_0, \nu, \theta)$ to 
\begin{equation}\label{eq:regFGL21}
	\begin{aligned}
		{\rm d} u &= (-(-\Delta)^{\alpha} u + F(u)){\rm d} t + \frac{\sqrt{C_d \nu}}{\|\theta\|_{\ell^2}}\sum_{k \in \mathbb{Z}_0^d}\sum_{i=1}^{d-1}\theta_k (\sigma_{k,i}\cdot\nabla)u \circ {\rm d}W^{k,i},\\
		u(0,\cdot) &= u_0,
	\end{aligned}
\end{equation}
with trajectories in $C([0,T];L^2(\mathbb{T}^d))$. Then
\begin{theorem}[cf. {\cite[Theorem 1.4]{flandoli2021b}}]\label{thm:FGL21}
	Assume $F$ satisfies (H1)-(H3) and $\mathcal{K} \subset L^2(\mathbb{T}^d)$ satisfies (H4). Then for arbitrary large time $T\in(0,\infty)$, $\nu = \nu(T) >0$ as in (H4) and arbitrary small $\epsilon >0$, there exists $\theta \in \ell^2$ such that
	\begin{equation}
		\mathbb{P}\left[\tau(u_0, \nu, \theta)\geq T\right] > 1- \epsilon \quad \forall u_0 \in \mathcal{K}.
	\end{equation}
\end{theorem}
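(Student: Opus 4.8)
The plan is to follow the \emph{scaling limit} strategy underlying \cite{flandoli2021a}: one chooses the intensity sequence $\theta$ so that, after passing to its It\^o form, the transport noise in \eqref{eq:regFGL21} degenerates into an \emph{enhanced dissipation} $\nu\Delta$, thereby collapsing the stochastic dynamics onto the deterministic Cauchy problem \eqref{eq:cauchyFGL}, whose global and uniformly bounded solution is guaranteed by (H4). Concretely, rewriting \eqref{eq:regFGL21} in It\^o form produces the corrector $S_\theta(u) = \frac{C_d\nu}{\|\theta\|_{\ell^2}^2}\sum_{k,i}\theta_k^2 (\sigma_{k,i}\cdot\nabla)(\overline{\sigma_{k,i}}\cdot\nabla)u$, where the pairing of $W^{k,i}$ with $W^{-k,i}$ forced by \eqref{eq:Wcov} and the relation $\sigma_{-k,i}=\overline{\sigma_{k,i}}$ are used. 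Since $\{a_{k,i}\}_{i=1}^{d-1}$ is an orthonormal basis of $k^\perp$, a direct computation (the cross terms vanishing because $a_{k,i}\cdot k = 0$) gives $\sum_{i}(\sigma_{k,i}\cdot\nabla)(\overline{\sigma_{k,i}}\cdot\nabla)u = (\Delta - |k|^{-2}(k\cdot\nabla)^2)u$. The first task is to exhibit a sequence $(\theta^N)$, normalized in $\ell^2$ and spreading its mass over increasingly high frequencies uniformly in direction, along which the anisotropic average $|k|^{-2}(k\cdot\nabla)^2$ self-averages to $\tfrac1d\Delta$; the choice $C_d=\tfrac{d}{d-1}$ is exactly what makes $S_{\theta^N}(u)\to C_d\nu(1-\tfrac1d)\Delta u = \nu\Delta u$, while the martingale coefficients $\theta^N_k/\|\theta^N\|_{\ell^2}$ become uniformly small, forcing the stochastic integral to vanish.

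Next I would derive a priori bounds that are \emph{uniform in $N$} and in $u_0\in\mathcal{K}$. Applying It\^o's formula to $\|u^N(t)\|_{L^2}^2$, the transport term contributes nothing to the energy balance because each $(\sigma_{k,i}\cdot\nabla)$ is skew-adjoint on $L^2$ (the $\sigma_{k,i}$ being divergence free), so the identity is governed by the dissipation $-(-\Delta)^\alpha$, the enhancement from $S_{\theta^N}$, and $\langle F(u^N),u^N\rangle$, which is controlled via the growth assumption (H2). This yields uniform bounds for $u^N$ in $L^\infty(0,T;L^2(\mathbb{T}^d))\cap L^2(0,T;H^\alpha(\mathbb{T}^d))$ up to the exit times $\tau_R^N := \inf\{t: \|u^N(t)\|_{L^2}\geq R\}\wedge T$; continuity (H1) together with the equation then provides a fractional time-regularity estimate in some $W^{\beta,p}(0,T;H^{-\alpha}(\mathbb{T}^d))$, which is precisely what the compactness scheme below requires.

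Armed with these estimates I would run the standard stochastic compactness argument: tightness of the laws of $(u^N)$ in $L^2(0,T;H^{\alpha-\eta}(\mathbb{T}^d))$ (via Aubin--Lions--Simon, using the $W^{\beta,p}$-bound and the compact Sobolev embedding) followed by a Skorokhod representation. On the new probability space one passes to the limit: the martingale part disappears, $S_{\theta^N}\to\nu\Delta$, and, crucially using continuity (H1) and the strong convergence in $L^2(0,T;H^{\alpha-\eta})$, the nonlinearity $F(u^N)$ converges to $F(u^\infty)$ in $H^{-\alpha}$. Hence every limit point solves \eqref{eq:cauchyFGL}; the local monotonicity (H3) gives uniqueness for that deterministic problem, so the limit is in fact deterministic and subsequence-independent, namely $u^\infty = u(\cdot;u_0^\infty,\nu)$ for the limiting initial datum $u_0^\infty\in\mathcal{K}$.

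To conclude, fix $R > M := \sup_{u_0\in\mathcal{K}}\sup_{t\in[0,T]}\|u(t;u_0,\nu)\|_{L^2}$, finite by (H4). Since the deterministic limit stays strictly inside the ball of radius $R$, degeneracy of the limit law forces $\mathbb{P}[\tau_R^N < T]\to 0$; because on $\{\tau_R^N=T\}$ the a priori bounds keep $u^N$ in the solution class and so prevent explosion, $\{\tau_R^N=T\}\subset\{\tau(u_0,\nu,\theta^N)\geq T\}$, yielding the claim for a single $u_0$. The uniformity over $\mathcal{K}$ is obtained by contradiction: were it to fail, there would be $\epsilon_0>0$ and $u_0^N\in\mathcal{K}$ with $\mathbb{P}[\tau_R^N(u_0^N)<T]\geq\epsilon_0$, and weak compactness of the convex, closed, bounded set $\mathcal{K}$ lets one extract $u_0^N\rightharpoonup u_0^\infty\in\mathcal{K}$; the same scaling-limit identification then produces a deterministic limit bounded by $M<R$, contradicting the persistent exit probability $\epsilon_0$. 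I expect the main obstacle to be exactly this identification step: quantifying the double limit so that the martingale vanishes and $S_{\theta^N}\to\nu\Delta$ while simultaneously passing to the limit in $F$ and keeping all estimates uniform over $\mathcal{K}$, since it must reconcile the spectral self-averaging of the noise with the monotonicity structure (H3) needed for uniqueness.
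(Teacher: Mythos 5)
Your overall strategy is exactly the one the paper follows in Appendix \ref{app:proofThmDivfree} (after \cite{flandoli2021b}): compute the It\^{o}--Stratonovich corrector, choose $\theta^N$ supported on high frequencies so that the martingale part vanishes and the corrector yields an enhanced dissipation, run a stochastic compactness/Skorokhod argument, identify the deterministic limit \eqref{eq:cauchyFGL} via (H1) and its uniqueness via (H3), invoke (H4), and get uniformity over $\mathcal{K}$ by contradiction and weak compactness of $\mathcal{K}$. Your corrector computation is correct (in fact, for the unprojected noise the symmetry condition on $\theta$ makes $S_\theta(u)=\nu\Delta u$ exactly, for every admissible $\theta$; the scaling limit is only needed to kill the stochastic integral).

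However, there is a genuine gap in your final step, and it is precisely the point the cut-off device in the paper is designed to handle. You localize with $L^2$-exit times $\tau_R^N=\inf\{t:\|u^N(t)\|_{L^2}\geq R\}\wedge T$ and then claim that ``degeneracy of the limit law forces $\mathbb{P}[\tau_R^N<T]\to 0$.'' This does not follow: your compactness only yields convergence (in probability, after Skorokhod) in topologies like $L^2(0,T;H^{\alpha-\eta})$ or $C([0,T];H^{-\epsilon})$, and with respect to these the functional $u\mapsto\sup_{t\in[0,T]}\|u(t)\|_{L^2}$ is merely \emph{lower} semicontinuous -- the inequality goes the wrong way. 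The approximating solutions can carry high-frequency spikes of $L^2$-size $\geq R$ which are invisible in $H^{-\epsilon}$, so $\mathbb{P}[\tau_R^N<T]\geq\epsilon_0$ for all $N$ is perfectly compatible with the limit law being $\delta_u$ with $\sup_t\|u(t)\|_{L^2}\leq M<R$; no contradiction is produced. The paper avoids this by never using stopping times: it works with a globally well-posed auxiliary equation \eqref{eq:StratDivfree} whose cut-off $g_R(u)=g_R(\|u\|_{H^{-\delta}})$ is a function of a norm that \emph{is} continuous with respect to the $C([0,T];H^{-\epsilon})$ component of the convergence topology (this requires upgrading (H2) to (H2') so the energy estimate closes with the weak-norm cut-off). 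Then $\mathbb{P}[\sup_t\|u^R(t)\|_{H^{-\delta}}>R]\leq\mathbb{P}[\sup_t\|u^R(t)-u(t)\|_{H^{-\delta}}>1]\to 0$ is legitimate, and on the complementary event the cut-off is inactive, so $u^R$ solves the original equation up to time $T$. A secondary, related defect of your plan: tightness and limit identification for stopped processes is delicate (stopping times are not continuous functionals of the paths and need not converge), whereas the cut-off formulation keeps all processes defined on $[0,T]$ and sidesteps this entirely. To repair your argument you would either have to adopt the cut-off formulation, or replace your exit criterion by one phrased in a norm compatible with the compactness topology -- which is exactly the paper's proof.
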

\begin{remark}\label{rem:hypFGL21}
\begin{enumerate}
	\item Assuming exponential decay of the $L^2(\mathbb{T}^3)$-norm of the solution to the deterministic system \eqref{eq:cauchyFGL} as well as existence of a pathwise unique global solution to \eqref{eq:regFGL21} for small initial conditions, then Theorem \ref{thm:FGL21} may even be extended to hold for infinite time horizon (cf. \cite[Theorem 1.4]{flandoli2021b}).
	\item By \cite[Remark 1.3, (iii)]{flandoli2021b}, if $F$ preserves the space of mean-zero functions, then considering the dynamics restricted to this closed subspace of $L^2(\mathbb{T}^d)$ as well as for any fixed constant $R\geq 0$, a suitable choice for $\mathcal{K}$ is
	\begin{equation}\label{eq:H4K}
		\mathcal{K} = \left\{ f \in L^2(\mathbb{T}^d): \int_{\mathbb{T}^d} f {\rm d} x = 0, \|f\|_{L^2} \leq R \right\}.
	\end{equation}
	\item A useful implication of (H2) is given in \cite[Remark 3.5]{flandoli2021b}: for sufficiently small parameter $\delta >0$, standard interpolation yields
	\subitem{(H2')} There exist $\tilde{\beta}_2>0$ and $\tilde{\gamma}_2 <2$ such that
	\begin{equation}
		|\langle F(u), u\rangle| \lesssim \left(1+\|u\|_{H^{\alpha}(\mathbb{T}^d)}^{\tilde{\gamma}_2}\right)\left(1+\|u\|_{H^{-\delta}(\mathbb{T}^d)}^{\tilde{\beta}_2}\right).
	\end{equation}
	\item By \cite[Remark 1.3, (ii)]{flandoli2021b}, hypothesis (H3) can be further generalized to
	\subitem{(H3')} There exist $N \in \mathbb{N}$ and non-negative parameters $\beta_3^j, \gamma_3^j, \kappa_j, \kappa_j'$, $j=1,...,N$ such that $\gamma_3^j \in (0,2), \beta_3^j + \gamma_3^j \geq 2, \gamma_3^j + \kappa_j \leq 2$ for all $j$ and
	\begin{equation}
		\begin{aligned}
			&|\langle u-v, F(u) - F(v) \rangle|\\
			& \lesssim \sum_{j=1}^N \|u-v\|_{L^2(\mathbb{T}^d)}^{\beta_3^j}\|u-v\|_{H^{\alpha}(\mathbb{T}^d)}^{\gamma_3^j}\\
			&\qquad\left(1+\|u\|_{L^2(\mathbb{T}^d)}^{\kappa_j'}+\|v\|_{L^2(\mathbb{T}^d)}^{\kappa_j'}\right)\left(1+\|u\|_{H^{\alpha}(\mathbb{T}^d)}^{\kappa_j}+\|v\|_{H^{\alpha}(\mathbb{T}^d)}^{\kappa_j}\right).
		\end{aligned}
	\end{equation}
\end{enumerate}
\end{remark}
\noindent
Observe that solutions to \eqref{eq:generalDet} need not be divergence-free, hence the noise in \eqref{eq:regFGL21} does not contain the Leray projection $\Pi$ (compare with \eqref{eq:noise}). In the course of this note, we will, however, be in the setting of divergence-free systems. Thus let 
\[ \mathcal{D} := \left\{ u \in L^2(\mathbb{T}^d):\quad {\rm div} u =0\right\}\]
and $\tilde{\tau}(u_0, \nu, \theta)$ denote the analogon to $\tau(u_0, \nu, \theta)$ for 
\begin{equation}\label{eq:regFGL21divfree}
	\begin{aligned}
		{\rm d} u &= (-(-\Delta)^{\alpha} u + F(u)){\rm d} t + \frac{\sqrt{C_d \nu}}{\|\theta\|_{\ell^2}}\sum_{k \in \mathbb{Z}_0^d}\sum_{i=1}^{d-1}\theta_k \Pi((\sigma_{k,i}\cdot\nabla)u) \circ {\rm d}W^{k,i},\\
		u(0,\cdot) &= u_0.
	\end{aligned}
\end{equation}
Then using the tools of \cite{flandoli2021a} in the proof of Theorem \ref{thm:FGL21} gives the following adapted result:
\begin{theorem}\label{thm:FGL21divfree}
	Additionally to the assumptions in Theorem \ref{thm:FGL21}, let $F$ preserve $\mathcal{D}$. Then for arbitrary large time $T>0$ and arbitrary small $\epsilon >0$, there exists $\theta \in \ell^2$ such that
	\begin{equation}
		\mathbb{P}\left[\tilde{\tau}(u_0, \nu, \theta)\geq T\right] > 1- \epsilon \quad \forall u_0 \in \mathcal{K}\cap\mathcal{D}.
	\end{equation}
\end{theorem}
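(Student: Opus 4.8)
The plan is to run the scaling-limit argument behind Theorem \ref{thm:FGL21} essentially verbatim, working throughout on the closed subspace $\mathcal{D}\subset L^2(\mathbb{T}^d)$, and to isolate the only two points at which the Leray projection $\Pi$ in the noise of \eqref{eq:regFGL21divfree} — absent in \eqref{eq:regFGL21} — actually enters. First I would observe that $\mathcal{D}$ is invariant under the full dynamics: the hyperdissipative operator $-(-\Delta)^{\alpha}$ commutes with $\Pi$, the drift $F$ preserves $\mathcal{D}$ by assumption, each noise coefficient $\Pi((\sigma_{k,i}\cdot\nabla)\cdot)$ maps into $\mathcal{D}$ by construction, and $u_0\in\mathcal{K}\cap\mathcal{D}$; hence solutions started in $\mathcal{K}\cap\mathcal{D}$ remain divergence-free, and it suffices to establish the estimates of \cite{flandoli2021b} on $\mathcal{D}$.

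The first place where $\Pi$ matters is the a priori energy balance, and here the divergence-free restriction is in fact helpful. For $u\in\mathcal{D}$ one has $\Pi u=u$, and since $\Pi$ is the $L^2$-orthogonal projection and each $\sigma_{k,i}$ is divergence-free,
\[
\langle \Pi((\sigma_{k,i}\cdot\nabla)u),u\rangle_{L^2(\mathbb{T}^d)}=\langle (\sigma_{k,i}\cdot\nabla)u,\Pi u\rangle_{L^2(\mathbb{T}^d)}=\langle (\sigma_{k,i}\cdot\nabla)u,u\rangle_{L^2(\mathbb{T}^d)}=0 .
\]
Thus the projected Stratonovich noise conserves the $L^2$-energy exactly as the unprojected transport noise does, so the energy estimates of \cite{flandoli2021b} driven by (H2) and (H2') are unaffected; likewise the drift bound (H1) and the local monotonicity (H3) concern only $F$ and are untouched. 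The continuity/growth/monotonicity machinery underlying Theorem \ref{thm:FGL21} therefore transfers without change.

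The crux is the scaling limit and the It\^o--Stratonovich corrector. Rewriting \eqref{eq:regFGL21divfree} in It\^o form produces a correction term $S_\theta(u)$ built from the doubly-projected operators $\Pi((\sigma_{k,i}\cdot\nabla)\Pi((\sigma_{-k,i}\cdot\nabla)\,\cdot\,))$, and I would choose the same high-frequency sequence $(\theta^N)_N\subset\ell^2$ (fixed $\ell^2$-norm, with mass escaping to high wavenumbers) used in \cite{flandoli2021b}. The key input taken from \cite{flandoli2021a} is that, on $\mathcal{D}$, this projected corrector converges as $N\to\infty$ to a strictly positive multiple $c_d\,\nu\,\Delta$ of the Laplacian, while the martingale part vanishes in the relevant topology. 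Tightness of the laws of the solutions $u^N$ in a path space such as $C([0,T];H^{-\delta}(\mathbb{T}^d))\cap L^2(0,T;\mathcal{D})$ then follows from (H1) and the Burkholder--Davis--Gundy inequality, and any limit point is identified, via the local monotonicity (H3), as the unique solution of the deterministic equation \eqref{eq:cauchyFGL} — after absorbing the harmless constant $c_d$ into $\nu$, which is legitimate since (H4) grants $\nu$ arbitrarily large.

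Finally, (H4) guarantees that this limit equation admits a global solution whose $L^2(\mathbb{T}^d)$-norm is bounded uniformly over $[0,T]$ and over $u_0\in\mathcal{K}\cap\mathcal{D}$. A stopping-time argument combined with the convergence in law (a Portmanteau-type lower bound on the event that the energy stays below a fixed level up to time $T$) then forces $\mathbb{P}[\tilde{\tau}(u_0,\nu,\theta^N)\geq T]\to 1$ uniformly in $u_0\in\mathcal{K}\cap\mathcal{D}$, which yields the claim for $N$ large enough. I expect the only genuinely delicate step to be the corrector computation on $\mathcal{D}$: one must control the commutator of $\Pi$ with the transport fields $\sigma_{k,i}\cdot\nabla$ in the high-frequency limit in order to see that the projection does not destroy the convergence to a multiple of $\Delta$. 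Since this is exactly the computation supplied by \cite{flandoli2021a}, the adaptation amounts to bookkeeping layered on top of the proof of Theorem \ref{thm:FGL21}.
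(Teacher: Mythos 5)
Your overall strategy coincides with the paper's: both adapt the proof of Theorem \ref{thm:FGL21} from \cite{flandoli2021b}, observing that the Leray projection in the noise is harmless in the energy balance (your computation $\langle \Pi((\sigma_{k,i}\cdot\nabla)u),u\rangle_{L^2}=0$ is exactly the cancellation used in the paper's Galerkin estimate, since the martingale and corrector contributions then cancel as in \cite{flandoli2021a}), and that the only substantive new input is the It\^o--Stratonovich corrector, whose convergence on divergence-free fields to a positive multiple of the Laplacian ($\frac{3\nu}{5}\Delta$ in $d=3$) is imported from \cite[Theorem 5.1]{flandoli2021a}; your absorption of the constant $c_d$ into $\nu$ via (H4) is implicitly done in the paper's final step as well.

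There is, however, a genuine gap in how you set up the compactness step. You propose to prove tightness of the laws of the solutions $u^N$ of \eqref{eq:regFGL21divfree} in a path space over the whole interval $[0,T]$, but these solutions are a priori only defined up to the random time $\tilde{\tau}(u_0,\nu,\theta^N)$, and the assertion that they persist on all of $[0,T]$ is precisely the conclusion of the theorem --- as stated, the argument is circular. The paper (following \cite{flandoli2021b}) resolves this by first introducing the truncated equation \eqref{eq:StratDivfree}, in which the nonlinearity is multiplied by the cut-off $g_R\left(\|u\|_{H^{-\delta}}\right)$: for this equation one proves \emph{global} well-posedness (Galerkin approximation, the a priori bound \eqref{eq:uNbound}, compactness via Lemma \ref{lem:compactLemmaFGL} and Skorokhod, pathwise uniqueness from (H3), Yamada--Watanabe), then establishes the scaling limit of Theorem \ref{thm:convDet} for the truncated dynamics, and only at the very end removes the cut-off on the event --- of probability $>1-\epsilon$ for $N$ large --- that $\sup_{t\in[0,T]}\|u^R(t;u_0,\theta^N,\nu)\|_{H^{-\delta}}$ never exceeds $R$. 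Your closing ``stopping-time argument'' gestures at the same removal, but a stopping-time substitute for the truncation is considerably more delicate (hitting times do not pass nicely through weak convergence and Skorokhod representation), which is exactly why the cut-off device is used. Note also that the local existence and uniqueness implicit in the very definition of $\tilde{\tau}$ is itself extracted from the well-posedness of the truncated equation, so this layer of the proof cannot be bypassed by declaring that the machinery of \cite{flandoli2021b} ``transfers without change'': carrying it out (with $\Pi_N$ in the Galerkin scheme and the projected corrector in the $W^{1,2}H^{-\alpha}$ estimates) is the actual content of the paper's appendix.
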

\noindent
The proof shall be given in Appendix \ref{app:proofThmDivfree}.

\section{Main results}\label{sec:mainResults}
\noindent
In this section, we shall bring together the components introduced in Section \ref{sec:mainIngredients}. Since the analysis for the transport noise in Section \ref{subsubsec:criteriaBlowUp} currently works only on the torus, we shall first check whether the analysis from \cite{tao2016} can be transferred to $\mathbb{T}^3$.
\subsection{The averaged NSE on $\mathbb{T}^3$}\label{subsec:periodTao}
Consider the following periodization of the functions $\psi_{i,n}$ in Definition \ref{def:cascadeOp}:
\begin{equation}\label{eq:firstPer}
	\psi_{i,n}^{\rm per}(x):=\sum_{l \in \mathbb{Z}^3} \psi_{i,n}(x+l).
\end{equation}
We observe the following: since $\psi_i$ is a Schwartz function on $\mathbb{R}^3$, we obtain for all $N \in \mathbb{N}$
that
\begin{equation}\label{PsiSchwartz}
	\sup_{x\in\mathbb{T}^3} | \psi_{i, n} (x + l) |^2 (1 + | x + l |)^N \lesssim_N 1 \quad \forall l \in \mathbb{Z}^3,
\end{equation}
hence let $N > 3$, then
\[ \sum_{l \in \mathbb{Z}^3} | \psi_{i, n} (x + l) |^2 \lesssim_N \sum_{l \in\mathbb{Z}^3} (1 + | x + l |)^{- N} < \infty \]
and
\[ \int_{\mathbb{T}^3} (1 + | x + l |)^{- N} {\rm d} x \]
is summable. Therefore \eqref{eq:firstPer} is well-defined and we may exchange integration and summation to obtain for $k \in \mathbb{Z}^3$
\begin{equation}\label{PsiPerPsi}
	\begin{aligned}
  		(\mathcal{F}_{\mathbb{T}^3} \psi^{\rm per}_{i, n}) (k) & = \int_{\mathbb{T}^3} \sum_{l \in \mathbb{Z}^3} \psi_{i, n} (x + l) e^{- 2 \pi {\rm i} k \cdot x} {\rm d} x \\
		& = \sum_{l \in \mathbb{Z}^3} \int_{\mathbb{T}^3} \psi_{i, n} (x + l) e^{- 2 \pi {\rm i} k \cdot x} {\rm d} x \\
		& = \sum_{l \in \mathbb{Z}^3} \int_{\mathbb{T}^3 + l} \psi_{i, n} (z) e^{- 2 \pi {\rm i} k \cdot z} {\rm d} z \\
		& = \int_{\mathbb{R}^3} \psi_{i, n} (z) e^{- 2 \pi {\rm i} k \cdot z}{\rm d} z \\
		& = (\mathcal{F}_{\mathbb{R}^3} \psi_{i, n}) (k)
	\end{aligned}
\end{equation}
where we used $e^{- 2 \pi {\rm i} k \cdot x} = e^{- 2 \pi {\rm i} k\cdot (x+ l)}$ $\forall l \in \mathbb{Z}^3$. Hence
\begin{equation}
	{\rm supp} \mathcal{F}_{\mathbb{T}^3} \psi_{i, n}^{\rm per} =\mathbb{Z}^3 \cap {\rm supp} \mathcal{F}_{\mathbb{R}^3} \psi_{i, n}.
\end{equation}
Furthermore we have 
\begin{equation}\label{PsiINPsi}
	\begin{aligned}
		(\mathcal{F}_{\mathbb{R}^3} \psi_{i, n}) (k) 
		& = (1 + \epsilon_0)^{- \frac{3 n}{2}} (\mathcal{F}_{\mathbb{R}^3}\psi_i) ((1 + \epsilon_0)^{- n} k).
	\end{aligned}
\end{equation}
Thus since ${\rm supp} \mathcal{F}_{\mathbb{R}^3} \psi_i \subset B_i \cup -B_i$, it holds
\begin{equation}\label{PsiINSupp}
	{\rm supp} \mathcal{F}_{\mathbb{R}^3} \psi_{i, n} \subset (1 +\epsilon_0)^n (B_i \cup - B_i)
\end{equation}
and
\begin{equation}\label{PsiPerSuppPsi}
	{\rm supp} \mathcal{F}_{\mathbb{T}^3} \psi_{i, n}^{\rm per} \subset \mathbb{Z}^3 \cap (1 + \epsilon_0)^n (B_i \cup - B_i).
\end{equation}
Finally note that $\psi_{i,n}^{\rm per}$ is divergence-free. Let $\tilde{\psi}^{\rm per}_{i, n}$ denote the $L^2$-normalization of $\psi^{\rm per}_{i, n}$
\[ \tilde{\psi}^{\rm per}_{i, n} (x) := \frac{1}{\| \psi_{i,n}^{\rm per} \|_{L^2 (\mathbb{T}^3)}} \psi^{\rm per}_{i, n} (x) \]
and consider the corresponding Cauchy problem
\begin{equation}\label{eq:periodCauchy}
	\begin{aligned}
		\partial_t u & = \Delta u + C (u, u),\\
		u (0, \cdot) & = u_0 := \tilde{\psi}^{\rm per}_{1, n_0},
	\end{aligned}
\end{equation}
where
\begin{equation*}
	\begin{aligned}
		C (u, v) & := C^{\rm per} (u, v)\\
		&:= \sum_{n, i, \mu} \alpha_{i, \mu} (1 + \epsilon_0)^{\frac{5n}{2}} \langle u, \tilde{\psi}_{i_1, n + \mu_1}^{\rm per} \rangle_{L^2(\mathbb{T}^3)} \langle v, \tilde{\psi}^{\rm per}_{i_2, n + \mu_2}\rangle_{L^2 (\mathbb{T}^3)} \tilde{\psi}^{\rm per}_{i_3, n + \mu_3}.
	\end{aligned}
\end{equation*}
Analogous to Lemma \ref{lem:Tao} define
\begin{align*}
	(\mathcal{F}_{\mathbb{T}^3} u_{i, n} (t)) (k) & := (\mathcal{F}_{\mathbb{T}^3} u (t)) (k) \mathds{1}_{\{ k \in \mathbb{Z}^3 \cap (1 +\epsilon_0)^n (B_i \cup - B_i)\}},\\
	X_{i, n} (t) & := \langle u (t), \tilde{\psi}_{i, n}^{\rm per}\rangle_{L^2},\\
	E_{i, n} (t) & := \frac{1}{2} \|u_{i, n} (t) \|^2_{L^2} .
\end{align*}
First we observe the following: it holds
\begin{equation}\label{uINest2}
	\begin{aligned}
		&\|u_{i,n}(t)\|_{H^{\kappa}} \\
		&= \left\|({\rm Id} - \Delta)^{\frac{\kappa}{2}}u_{i,n}(t)\right\|_{L^2}\\
		&= \left( \sum_{k \in \mathbb{Z}^d} \left|\mathcal{F}_{\mathbb{T}^3}\left(({\rm Id} - \Delta)^{\frac{\kappa}{2}}u_{i,n}(t)\right)\right|^2\right)^{\frac{1}{2}}\\
		&= \left( \sum_{\substack{k \in \mathbb{Z}^3 \cap \\(1+\epsilon_0)^n(B_i \cup - B_i)}}\left(1+4\pi^2|k|^{2n}\right)^{\kappa}\left|\mathcal{F}_{\mathbb{T}^3}u(t)\right|^2\right)^{\frac{1}{2}}\\
		&\lesssim \left(1+4\pi^2(1+\epsilon_0)^{2n}\right)^{-\frac{\beta}{2}}\left( \sum_{\substack{k \in \mathbb{Z}^3 \cap\\ (1+\epsilon_0)^n(B_i \cup - B_i)}}\left(1+4\pi^2|k|^{2n}\right)^{\kappa+\beta}\left|\mathcal{F}_{\mathbb{T}^3}u(t)\right|^2\right)^{\frac{1}{2}}\\
		&\leq \left(1+4\pi^2(1+\epsilon_0)^{2n}\right)^{-\frac{\beta}{2}}\|u(t)\|_{H^{\kappa+\beta}}
	\end{aligned}
\end{equation}
for any $\kappa, \beta \in \mathbb{R}$. Then the blow-up result formulated in Theorem \ref{thm:FGL21divfree} carries over to $\mathbb{T}^3$ as a consequence of the following
\begin{lemma}
	Assume that $u:[0,\infty) \rightarrow H_{\rm df}^{10}(\mathbb{T}^3)$ is a mild solution to \eqref{eq:periodCauchy}, then $(X_{i,n})_{i\in\{1,...,m\},n\in\mathbb{Z}}$ and $(E_{i,n})_{i\in\{1,...,m\},n\in\mathbb{Z}}$ satisfy \eqref{eq:boundXin}-\eqref{eq:initCond} in Lemma \ref{lem:Tao}.
\end{lemma}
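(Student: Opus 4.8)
The plan is to show that each assertion \eqref{eq:boundXin}--\eqref{eq:initCond} rests on exactly the three structural features that drive Tao's proof of Lemma~\ref{lem:Tao} on $\mathbb{R}^3$, and that all three survive periodization, so that his argument transfers almost verbatim once the periodic objects are substituted. The three features are: (i) the $L^2(\mathbb{T}^3)$-orthonormality of the family $\{\tilde\psi^{\rm per}_{i,n}\}$; (ii) the localization of $\mathcal{F}_{\mathbb{T}^3}\tilde\psi^{\rm per}_{i,n}$ in the band $\mathbb{Z}^3\cap(1+\epsilon_0)^n(B_i\cup-B_i)$, on which $|k|\sim(1+\epsilon_0)^n$; and (iii) the multiplier estimate \eqref{uINest2}. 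First I would establish (i): the balls $B_i,-B_i$ are pairwise disjoint inside $\{1<|\xi|<1+\tfrac{\epsilon_0}{2}\}$, and since $1+\tfrac{\epsilon_0}{2}<1+\epsilon_0$ the rescaled annuli at consecutive scales do not meet, so the supports $(1+\epsilon_0)^n(B_i\cup-B_i)$ are pairwise disjoint across $(i,n)$. By \eqref{PsiPerSuppPsi}, intersecting with $\mathbb{Z}^3$ preserves this disjointness, and after dividing by $\|\psi^{\rm per}_{i,n}\|_{L^2(\mathbb{T}^3)}$ Parseval yields an \emph{exactly} orthonormal family.

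With orthonormality in hand, the algebraic assertions follow at once. The initial data in \eqref{eq:periodCauchy} give $X_{i,n}(0)=\langle\tilde\psi^{\rm per}_{1,n_0},\tilde\psi^{\rm per}_{i,n}\rangle_{L^2}=\mathds{1}_{\{(i,n)=(1,n_0)\}}$, whence $E_{i,n}(0)=\tfrac12\|(u_0)_{i,n}\|^2_{L^2}=\tfrac12 X_{i,n}(0)^2$, which is part~(2). For the cascade term in part~(3) I would differentiate $X_{i,n}=\langle u,\tilde\psi^{\rm per}_{i,n}\rangle_{L^2}$ along \eqref{eq:periodCauchy}; expanding $\langle C^{\rm per}(u,u),\tilde\psi^{\rm per}_{i,n}\rangle_{L^2}$ over its summation scale $n'$ and using orthonormality, only the summand with third index $i_3=i$ and $n'+\mu_3=n$ survives the pairing, collapsing the triple sum to the single cascade sum in \eqref{eq:evolXin} with the identical structure constants and powers $(1+\epsilon_0)^{\frac{5(n-\mu_3)}{2}}$. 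The same computation shows that $(C^{\rm per}(u,u))_{i,n}$ equals that cascade factor times $\tilde\psi^{\rm per}_{i,n}$, i.e.\ the nonlinearity feeds the band $(i,n)$ only along $\tilde\psi^{\rm per}_{i,n}$; pairing with $u_{i,n}$ and discarding the nonpositive term $\langle\Delta u_{i,n},u_{i,n}\rangle_{L^2}$ yields the energy inequality in part~(3). Part~(5) then follows by treating $\{X_{i,n},E_{i,n}\}_{n<n_0}$ as a closed system with zero data: the only coupling to scales $\ge n_0$ that could act as a source would require the shift $(1,1,0)\notin S$, so the system is homogeneous in the unknowns and a Grönwall argument forces it to vanish, exactly as on $\mathbb{R}^3$.

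The remaining, genuinely analytic, points are the $O$-terms, the bounds in part~(1), and the comparison in part~(4). The dissipative $O$-term in \eqref{eq:evolXin} comes from $\langle\Delta u,\tilde\psi^{\rm per}_{i,n}\rangle_{L^2}=\langle\Delta u_{i,n},\tilde\psi^{\rm per}_{i,n}\rangle_{L^2}$ (band localization) together with $\|\Delta u_{i,n}\|_{L^2}\lesssim(1+\epsilon_0)^{2n}\|u_{i,n}\|_{L^2}=(1+\epsilon_0)^{2n}\sqrt{2E_{i,n}}$, the multiplier bound underlying \eqref{uINest2}. Part~(1) follows from $|X_{i,n}|\le\|u_{i,n}\|_{L^2}$ and \eqref{uINest2} with $\kappa=0,\beta=10$, giving $(1+(1+\epsilon_0)^{10n})|X_{i,n}|\lesssim\|u(t)\|_{H^{10}}$, uniformly bounded on $[0,T]$ since a mild solution with values in $H_{\rm df}^{10}(\mathbb{T}^3)$ is bounded on each $[0,T]$; the estimate for $E_{i,n}$ is analogous. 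I expect the crux to be part~(4): writing $u_{i,n}=X_{i,n}\tilde\psi^{\rm per}_{i,n}+w_{i,n}$ with $w_{i,n}$ in the same band but orthogonal to $\tilde\psi^{\rm per}_{i,n}$, the lower bound $\tfrac12 X_{i,n}^2\le E_{i,n}$ is Cauchy--Schwarz, whereas the upper bound amounts to showing that the orthogonal mass $\tfrac12\|w_{i,n}\|_{L^2}^2$ stays of order $(1+\epsilon_0)^{2n}\int_0^t E_{i,n}$. Here I would use that, by the computation above, the cascade injects no energy into $w_{i,n}$, so that $w_{i,n}$ is driven solely by the band-restricted Laplacian, and close a Gr\"onwall estimate against $E_{i,n}$ using the rate $(1+\epsilon_0)^{2n}$ from \eqref{uINest2}. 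Because the dissipation is diagonal in the $\mathbb{Z}^3$-Fourier variable and this localization furnishes the same rate as on $\mathbb{R}^3$, the estimate closes exactly as in \cite[Lemma~4.1]{tao2016}; the convergence of the series defining $C^{\rm per}(u,u)$, needed to justify these manipulations, is guaranteed once parts~(1) and~(5) restrict the sum to $n\ge n_0$.
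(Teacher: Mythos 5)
Your proposal is correct and takes essentially the same approach as the paper: both arguments rest on the Fourier-support disjointness (hence $L^2(\mathbb{T}^3)$-orthonormality) of the periodized wavelets $\tilde{\psi}^{\rm per}_{i,n}$ together with the multiplier estimate \eqref{uINest2}, from which the collapse of the cascade sum, the dissipative $O$-terms, the initial conditions and the $H^{10}$-bounds follow exactly as on $\mathbb{R}^3$. The only difference is one of completeness: where the paper closes parts (4) and (5) by declaring them ``analogous to the proof of Lemma \ref{lem:Tao} in \cite{tao2016}'', you sketch the Gr\"onwall arguments explicitly (via the decomposition $u_{i,n}=X_{i,n}\tilde{\psi}^{\rm per}_{i,n}+w_{i,n}$ and the structure of the shift set $S$), and these sketches are consistent with Tao's original argument.
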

\begin{proof}
From \eqref{uINest2} we immediately deduce
\begin{equation*}
	\begin{aligned}
		&\sup_{t\in[0,T]} \sup_{n\in\mathbb{Z}} \sup_{i\in\{1,...,m\}} (1 + (1 + \epsilon_0)^{10 n}) \sqrt{2 E_{i, n} (t)} \\
		& = \sup_{t\in[0,T]} \sup_{n\in\mathbb{Z}} \sup_{i\in\{1,...,m\}} (1 + (1 + \epsilon_0)^{10 n}) \|u_{i, n} (t) \|_{L^2}\\
		& \leq \sup_{t\in[0,T]} \sup_{n\in\mathbb{Z}} (1 + (1 + \epsilon_0)^{10 n}) (1 + 4 \pi^2 (1 +\epsilon_0)^{2 n})^{- 5} \|u (t) \|_{H^{10}}\\
		& \leq\|u\|_{C_t^0 H_x^{10}}
	\end{aligned}
\end{equation*}
as well as
\[ \sup_{t\in[0,T]} \sup_{n\in\mathbb{Z}} \sup_{i\in\{1,...,m\}}  (1 + (1 + \epsilon_0)^{10 n}) | X_{i, n} (t) | \leq \sup_{t,n,i} (1 + (1 + \epsilon_0)^{10 n}) \|u_{i, n} (t) \|_{L^2} \]
which gives \eqref{eq:boundXin} by $u \in C_t^0H_x^{10}$. \\
Analysing the time evolution of the above quantities we obtain for $X_{i, n}$
\[ \partial_t X_{i, n} (t) = \langle \Delta u (t), \tilde{\psi}_{i,n}^{\rm per} \rangle_{L^2} + \langle C (u (t), u (t)),\tilde{\psi}_{i, n}^{\rm per} \rangle_{L^2} . \]
Using \eqref{PsiPerSuppPsi}, the first summand is of the form
\begin{equation*}
	\begin{aligned}
		\langle \Delta u (t), \tilde{\psi}_{i, n}^{\rm per} \rangle_{L^2}& = \sum_{k \in \mathbb{Z}^3}  (\mathcal{F}_{\mathbb{T}^3} (\Delta u(t))) (k) \cdot (\mathcal{F}_{\mathbb{T}^3} \tilde{\psi}_{i,n}^{\rm per}) (k)\\
		& = - 4 \pi^2 \sum_{k \in \mathbb{Z}^3} | k |^2 (\mathcal{F}_{\mathbb{T}^3} u(t)) (k)\cdot (\mathcal{F}_{\mathbb{T}^3} \tilde{\psi}_{i, n}^{\rm per}) (k)\\
		& =- 4 \pi^2 \sum_{k \in \mathbb{Z}^3\cap (1 +\epsilon_0)^n (B_i \cup - B_i)} | k |^2 (\mathcal{F}_{\mathbb{T}^3} u (t)) (k) \cdot (\mathcal{F}_{\mathbb{T}^3} \tilde{\psi}_{i, n}^{\rm per}) (k)\\
		& = - 4 \pi^2 \sum_{k \in \mathbb{Z}^3} | k |^2 (\mathcal{F}_{\mathbb{T}^3} u_{i, n} (t)) (k)\cdot
		 (\mathcal{F}_{\mathbb{T}^3}\tilde{\psi}_{i, n}^{\rm per}) (k)\\
		& = \langle \Delta u_{i, n} (t), \tilde{\psi}_{i, n}^{\rm per}\rangle_{L^2}.
	\end{aligned}
\end{equation*}
Similarly and since
\begin{equation*}
	\begin{aligned}
		&(\mathcal{F}_{\mathbb{T}^3} C (u (t), u (t))) (k) \\
		& = \sum_{n, i, \mu}\alpha_{i, \mu} (1 + \epsilon_0)^{\frac{5 n}{2}} \langle u (t), \tilde{\psi}_{i_1, n + \mu_1}^{\rm per}\rangle_{L^2} \langle u (t), \tilde{\psi}_{i_2, n + \mu_2}^{\rm per} \rangle_{L^2} (\mathcal{F}_{\mathbb{T}^3} \tilde{\psi}_{i_3, n + \mu_3}^{\rm per}) (k)
	\end{aligned}
\end{equation*}
we obtain
\begin{equation*}
	\begin{aligned}
		&\langle C (u (t), u (t)), \tilde{\psi}_{i, n}^{\rm per} \rangle_{L^2} \\
		& = \sum_{k \in \mathbb{Z}^3} \sum_{\tilde{n}, j, \mu}\alpha_{j, \mu} (1 + \epsilon_0)^{\frac{5 \tilde{n}}{2}}\langle u (t), \tilde{\psi}_{j_1, \tilde{n} +\mu_1}^{\rm per} \rangle_{L^2} \langle u (t), \tilde{\psi}_{j_2, \tilde{n} + \mu_2}^{\rm per} \rangle_{L^2}\\
		& \hspace{2.5cm}  (\mathcal{F}_{\mathbb{T}^3} \tilde{\psi}_{j_3,\tilde{n} + \mu_3}^{\rm per}) (k) \cdot (\mathcal{F}_{\mathbb{T}^3} \tilde{\psi}_{i,n}^{\rm per}) (k)\\
		& =\sum_{k \in \mathbb{Z}^3} \sum_{i_1, i_2, \mu} \alpha_{i_1, i_2, i,\mu} (1 + \epsilon_0)^{\frac{5 (n - \mu_3)}{2}}\\
		& \hspace{2cm} \langle u (t), \tilde{\psi}_{i_1, n - \mu_3 +\mu_1}^{\rm per} \rangle_{L^2} \langle u (t), \tilde{\psi}_{i_2, n - \mu_3 + \mu_2}^{\rm per} \rangle_{L^2} \left|(\mathcal{F}_{\mathbb{T}^3} \tilde{\psi}_{i,n}^{\rm per}) (k)\right|^2\\
		& =\sum_{i_1, i_2, \mu} \alpha_{i_1, i_2, i, \mu} (1 +\epsilon_0)^{\frac{5 (n - \mu_3)}{2}} X_{i_1, n - \mu_3 + \mu_1} (t) X_{i_2,n - \mu_3 + \mu_2} (t)
	\end{aligned}
\end{equation*}
which in total yields
\begin{equation*}
	\begin{aligned}
		\partial_t X_{i, n} (t) & = \langle \Delta u_{i, n} (t), \tilde{\psi}_{i,n}^{\rm per} \rangle_{L^2}\\
		& \hspace{0.5cm} + \sum_{i_1, i_2, \mu} \alpha_{i_1, i_2, i, \mu} (1 +\epsilon_0)^{\frac{5 (n - \mu_3)}{2}} X_{i_1, n - \mu_3 + \mu_1} (t) X_{i_2,n - \mu_3 + \mu_2} (t).
	\end{aligned}
\end{equation*}
Furthermore by \eqref{uINest2} it holds
\[ \langle \Delta u_{i, n}(t), \tilde{\psi}_{i, n}^{\rm per} \rangle_{L^2} \in O\left( (1 + \epsilon_0)^{2 n}\sqrt{E_{i, n}(t)} \right) \]
which yields \eqref{eq:evolXin}. Additionally we have
\begin{equation*}
	\begin{aligned}
		X_{i, n} (0) & = \langle u (0), \tilde{\psi}_{i, n}^{\rm per} \rangle_{L^2} = \langle u_0, \tilde{\psi}_{i, n}^{\rm per} \rangle_{L^2} = \langle \tilde{\psi}_{1, n_0}^{\rm per}, \tilde{\psi}_{i, n}^{\rm per}\rangle_{L^2}\\
		& =\mathds{1}_{\{(i, n) = (1, n_0)\}}.
	\end{aligned}
\end{equation*}
For the local energy we obtain by a similar analysis as for $X_{i, n}$:
\begin{equation*}
	\begin{aligned}
		&\partial_t E_{i, n} (t) \\
		& =  \sum_{k \in \mathbb{Z}^3}(\mathcal{F}_{\mathbb{T}^3} (\Delta u_{i, n} (t))) (k)\cdot(\mathcal{F}_{\mathbb{T}^3} u_{i, n} (t)) (k)\\
		& \hspace{1cm} + \sum_{i_1, i_2, \mu} \alpha_{i_1, i_2, i, \mu} (1 +\epsilon_0)^{\frac{5 (n - \mu_3)}{2}} X_{i_1, n - \mu_3 + \mu_1} (t) X_{i_2,n - \mu_3 + \mu_2} (t)\\
		& \hspace{2.5cm} (\mathcal{F}_{\mathbb{T}^3} \tilde{\psi}_{i, n}^{\rm per})(k)\cdot (\mathcal{F}_{\mathbb{T}^3} u_{i, n} (t)) (k)\\
		& =\langle \Delta u_{i, n} (t), u_{i, n} (t) \rangle_{L^2}\\
		& \hspace{0.5cm}+ \sum_{i_1, i_2, \mu} \alpha_{i_1, i_2, i, \mu} (1 +\epsilon_0)^{\frac{5 (n - \mu_3)}{2}} X_{i_1, n - \mu_3 + \mu_1} (t) X_{i_2,n - \mu_3 + \mu_2} (t) X_{i, n} (t).
	\end{aligned}
\end{equation*}
The rest of the proof is analogous to the proof of Lemma \ref{lem:Tao} in \cite{tao2016}.
\end{proof}
\subsection{Local well-posedness}\label{subsec:localW}
Observe that the analysis in \cite{flandoli2021b} requires the existence of unique local solutions both to \eqref{eq:generalDet} as well as \eqref{eq:regFGL21} and \eqref{eq:regFGL21divfree} (cf. Remark 1.3 (i) in \cite{flandoli2021b}). Consider systems of the form
\begin{equation}\label{eq:periodCauchy}
	\partial_t u  = \Delta u + F(u)
\end{equation}
where
\begin{equation}\label{eq:nonlin}
	\begin{aligned}
		F(u)&:= C (u, u) \\
		&= \sum_{n, i, \mu} \alpha_{i, \mu} (1 + \epsilon_0)^{\frac{5n}{2}} \langle u, \tilde{\psi}_{i_1, n + \mu_1}^{\rm per} \rangle_{L^2(\mathbb{T}^3)} \langle u, \tilde{\psi}^{\rm per}_{i_2, n + \mu_2}\rangle_{L^2 (\mathbb{T}^3)} \tilde{\psi}^{\rm per}_{i_3, n + \mu_3}.
	\end{aligned}
\end{equation}
\noindent
Recall that in Tao's model, the coefficients $\alpha_{i,\mu}$ are chosen in such a way that the cancellation property
\begin{equation}
	\langle F(u), u \rangle_{L^2} = 0
\end{equation}
holds for all $u \in H_{\rm df}^{10}$. Thus if $u \in H_{\rm df}^{10}$, then we easily deduce
\begin{equation}
	\partial_t \|u(t)\|_{L^2}^2 = -\|\nabla u(t)\|_{L^2}^2
\end{equation}
and hence the energy equality
\begin{equation}
	\sup_{t \in [0,T]} \|u(t)\|_{L^2}^2 + \int_0^T\|\nabla u(t)\|_{L^2}^2{\rm d}t = \|u_0\|_{L^2}^2
\end{equation}
for $T \in [0,\infty]$. Thus let us define a weak solution to \eqref{eq:periodCauchy} in the following way:
\begin{definition}
	A vector field $u \in L^{\infty}(0,T;L^2_{\rm df}(\mathbb{T}^3)) \cap L^2(0,T; H^1_{\rm df}(\mathbb{T}^3))$ is called a weak solution to \eqref{eq:periodCauchy} if
	\begin{equation}
		\begin{aligned}
			&-\int_0^T \int_{\mathbb{T}^3} \langle u, \partial_t \phi\rangle {\rm d}x {\rm d}t - \int_0^T \int_{\mathbb{T}^3} \langle F(u), \phi\rangle{\rm d}x {\rm d}t +\int_0^T\int_{\mathbb{T}^3} \langle \nabla u , \nabla \phi \rangle {\rm d}x {\rm d}t\\
			&= \int_{\mathbb{T}^3} \langle u_0, \phi(0) {\rm d}x
		\end{aligned}
	\end{equation}
	for any divergence-free test function $\phi \in C^{\infty}_c([0,T)\times\mathbb{T}^3)$. 
\end{definition}
\noindent
In this section, we shall discuss whether there exist unique weak solutions to \eqref{eq:periodCauchy}. In general, local existence and uniqueness are guaranteed by the hypotheses (H1)-(H3) roughly as follows: first considering a Galerkin approximation on a finite-dimensional subspace, (H1) and (H3) provide that locally, corresponding solutions exist and are unique. Moreover by (H2), they satisfy an energy inequality, and with the help of (H1) again we may pass to the limit to recover local unique solutions for the original system. 
\subsubsection{Violation of hypotheses}
\noindent
It turns out, however, that for $u \in L^{\infty}(0,T;L^2_{\rm df}(\mathbb{T}^3)) \cap L^2(0,T; H^1_{\rm df}(\mathbb{T}^3))$ neither of the hypotheses is satisfied:
\begin{lemma}\label{lem:violation}
	The operator $F$ as defined in \eqref{eq:nonlin} does not satisfy (H1)-(H3).
\end{lemma}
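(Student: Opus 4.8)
The plan is to test (H1) and (H3) against explicit configurations built from the periodized basis functions $\tilde\psi^{\rm per}_{i,n}$, chosen so that the amplification factor $(1+\epsilon_0)^{5n/2}$ in \eqref{eq:nonlin} defeats the admissible powers of $\|\cdot\|_{H^1}$ on the right-hand sides. I would first record two scaling facts. Because $\psi_i$ is Schwartz with Fourier transform supported in the thin annulus of Definition~\ref{def:cascadeOp}, the function $\tilde\psi^{\rm per}_{i,n}$ is frequency-localized at $|k|\asymp(1+\epsilon_0)^n$ (see \eqref{PsiPerSuppPsi}), so $\|\tilde\psi^{\rm per}_{i,n}\|_{H^s}\asymp(1+\epsilon_0)^{ns}$ for all $s\in\mathbb{R}$ and large $n$; in particular $\|\tilde\psi^{\rm per}_{i,n}\|_{H^{-1}}\asymp(1+\epsilon_0)^{-n}$. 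Moreover, as the Riemann-sum computation \eqref{PsiPerPsi} shows, for large $n$ the family $\{\tilde\psi^{\rm per}_{i,n}\}$ is asymptotically orthonormal in $L^2(\mathbb{T}^3)$, i.e. $\langle\tilde\psi^{\rm per}_{i,n},\tilde\psi^{\rm per}_{j,m}\rangle_{L^2}\approx\delta_{(i,n),(j,m)}$. Substituting $u=\tilde\psi^{\rm per}_{1,N}$ into \eqref{eq:nonlin} then leaves only the shifts $\mu=(0,0,0)$ and $\mu=(0,0,1)$ (the other elements of $S$ have $\mu_1\neq\mu_2$), yielding $F(\tilde\psi^{\rm per}_{1,N})\approx(1+\epsilon_0)^{5N/2}\sum_{i_3}\bigl(\alpha_{1,1,i_3,(0,0,0)}\tilde\psi^{\rm per}_{i_3,N}+\alpha_{1,1,i_3,(0,0,1)}\tilde\psi^{\rm per}_{i_3,N+1}\bigr)$, a field living at scales $N$ and $N+1$ with amplitude $\asymp(1+\epsilon_0)^{5N/2}$.

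To violate (H1) I would take $u_N:=\tilde\psi^{\rm per}_{1,N}$ and let $N\to\infty$. Here $\|u_N\|_{L^2}=1$ and $\|u_N\|_{H^1}\asymp(1+\epsilon_0)^N$, whereas the one-mode formula together with the $H^{-1}$ scaling gives $\|F(u_N)\|_{H^{-1}}\asymp(1+\epsilon_0)^{5N/2}(1+\epsilon_0)^{-N}=(1+\epsilon_0)^{3N/2}$, the lower bound using nondegeneracy of the cascade (some $\alpha_{1,1,i_3,\mu}\neq0$, which must hold in Tao's construction lest no energy be transferred between scales). For every admissible exponent $\beta_1$ the right-hand side of (H1) is $\asymp(1+\epsilon_0)^N$, so the ratio is $\asymp(1+\epsilon_0)^{N/2}\to\infty$; no choice of $\beta_1,\eta$ can rescue the inequality.

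For (H3) I would exploit the algebraic structure of $F$. Since $F(u)=C(u,u)$ with $C$ symmetric and bilinear, $F(u)-F(v)=C(u-v,u+v)$; writing $w=u-v$, $z=u+v$ and invoking the symmetrized cancellation condition of Definition~\ref{def:cascadeOp} one obtains the identity $\langle u-v,F(u)-F(v)\rangle=\langle w,C(w,z)\rangle=-\tfrac12\langle C(w,w),z\rangle$. I would then pick $w=\tilde\psi^{\rm per}_{1,N}$ and $z=\tilde\psi^{\rm per}_{i^*,N+1}$ (hence $u=\tfrac12(w+z)$, $v=\tfrac12(z-w)$), where $i^*$ is an index with $\alpha_{1,1,i^*,(0,0,1)}\neq0$; such an index exists precisely because this is the scale-$N$-to-$(N+1)$ coupling that drives the cascade. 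The one-mode formula gives $\langle C(w,w),z\rangle\approx(1+\epsilon_0)^{5N/2}\alpha_{1,1,i^*,(0,0,1)}$, so the left-hand side of (H3) is $\asymp(1+\epsilon_0)^{5N/2}$. Meanwhile $\|w\|_{L^2}=1$, $\|w\|_{H^1}\asymp(1+\epsilon_0)^N$ and $\|u\|_{H^1},\|v\|_{H^1}\asymp(1+\epsilon_0)^N$, so the right-hand side is $\asymp(1+\epsilon_0)^{N(\gamma_3+\kappa)}\le(1+\epsilon_0)^{2N}$ since the structural constraint forces $\gamma_3+\kappa\le2$. As $\tfrac52>2$, the quotient diverges as $N\to\infty$, so (H3) fails; since $\|u\|_{L^2},\|v\|_{L^2}$ stay bounded, the identical estimate also defeats the generalized condition (H3').

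Finally, (H2) is the one condition that does hold: the cancellation property forces $\langle F(u),u\rangle=0$ for every divergence-free $u\in H^1(\mathbb{T}^3)$ — the defining series converges absolutely thanks to \eqref{uINest2} — so (H2), and likewise (H2'), are satisfied trivially. The substantive obstructions therefore lie in (H1) and (H3), either of which already negates the conjunction (H1)-(H3) asserted in the lemma. I expect the only step requiring care to be the passage from the $\mathbb{R}^3$ objects to their periodizations, namely bounding the error in the near-orthonormality $\langle\tilde\psi^{\rm per}_{i,n},\tilde\psi^{\rm per}_{j,m}\rangle_{L^2}\approx\delta$ and in $\|\tilde\psi^{\rm per}_{i,n}\|_{L^2}\approx1$, which is exactly the lattice Riemann-sum error in \eqref{PsiPerPsi}. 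Since $\psi_i$ is Schwartz with Fourier support in a fixed thin annulus, this error decays rapidly in $n$ and is negligible against the $(1+\epsilon_0)^{5N/2}$-scalings, so none of the leading-order asymptotics above are affected.
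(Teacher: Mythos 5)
Your route is genuinely different from the paper's, and where it applies it is sharper. The paper constructs no counterexamples at all: it argues that any estimate of the relevant pairings must proceed by term-wise estimation of the defining series (to justify exchanging summation and pairing), and then shows the exponent bookkeeping is contradictory --- summability over $n\geq 0$ demands more total regularity than the exponent caps in (H1), (H2'), (H3') allow. That argument is coefficient-independent (as the paper remarks), but for that very reason it can only show that the hypotheses \emph{cannot be verified} by such estimates; it cannot show genuine failure, since for vanishing $\alpha_{i,\mu}$ one has $F\equiv 0$ and all hypotheses hold. Your test functions $u_N=\tilde{\psi}^{\rm per}_{1,N}$ instead give a genuine quantitative violation of (H1) and of (H3)/(H3') with the correct rates. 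Note that the modes are \emph{exactly} orthonormal, not merely asymptotically so: the sets $\mathbb{Z}^3\cap(1+\epsilon_0)^n(B_i\cup -B_i)$ are pairwise disjoint, so the only periodization issue is the non-vanishing of $\psi^{\rm per}_{i,n}$, which holds for large $n$; your worry about Riemann-sum errors is unnecessary. Two points do need tightening. First, the nondegeneracy you invoke can be pinned down rigorously: if $\alpha_{1,1,i_3,\mu}=0$ for all $i_3$ and both $\mu\in\{(0,0,0),(0,0,1)\}$, then $e^{t\Delta}\psi_{1,n_0}$, whose Fourier support meets no mode other than $(1,n_0)$, would be a global mild solution of \eqref{eq:avNSE}, contradicting Theorem \ref{thm:blowupTao}. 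Second, for (H3) you insist on a nonzero cross-scale coefficient $\alpha_{1,1,i^*,(0,0,1)}$, which this argument alone does not guarantee (it may be the same-scale coefficient with $\mu=(0,0,0)$ that is nonzero); the fix is trivial --- take $z$ to be any normalized mode in the nonempty Fourier support of $F(w)$, at scale $N$ or $N+1$, and the scaling analysis is unchanged.

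The substantive discrepancy is (H2). The paper asserts that (H2), even (H2'), is violated; you assert that it holds trivially, and your reasoning is correct: for $u\in H^1(\mathbb{T}^3)$, the estimate \eqref{uINest2} gives $|\langle u,\tilde{\psi}^{\rm per}_{i,n}\rangle_{L^2}|\lesssim (1+\epsilon_0)^{-n}\|u\|_{H^1}$ for $n\geq 0$, so the triple-product series is dominated by $\sum_{n\geq 0}(1+\epsilon_0)^{-n/2}\|u\|_{H^1}^3$ plus a convergent $n<0$ tail; it therefore converges absolutely, may be rearranged into permutation orbits, and each orbit vanishes by the cancellation condition on the $\alpha_{i,\mu}$, whence $\langle F(u),u\rangle=0$. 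The paper's (H2) argument only shows that term-wise estimation of the series cannot produce the required bound; it does not exclude that the quantity itself vanishes --- and it does. Consequently your proof establishes the lemma only in the reading ``(H1)--(H3) do not all hold,'' via the genuine failure of (H1) and (H3); it does not, and cannot, establish that each of the three conditions fails individually, and you should say explicitly that your conclusion on (H2) contradicts the lemma as literally stated. (The paper's own introduction, which says the three conditions ``cannot be shown to hold true,'' matches your finding better than the lemma's wording does.) Since the downstream use of the lemma --- ruling out the delayed-blow-up machinery at the velocity level --- needs only the failure of the conjunction, this is a defect in the lemma's formulation rather than a gap in your argument, but it must be flagged rather than passed over.
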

\begin{proof}
	In attempting to prove the hypotheses, the procedure is as follows: in order to estimate terms of the form $|\langle F(u), \phi\rangle|$, we need to first justify the interchange of integration and summation over $n \in \mathbb{Z}$, i.e. that
	\[\sum_{n, i, \mu}\left| \alpha_{i, \mu} (1 + \epsilon_0)^{\frac{5n}{2}} \langle u, \tilde{\psi}_{i_1, n + \mu_1}^{\rm per} \rangle_{L^2} \langle u, \tilde{\psi}^{\rm per}_{i_2, n + \mu_2}\rangle_{L^2} \langle \tilde{\psi}^{\rm per}_{i_3, n + \mu_3}, \phi \rangle\right|\]
	is well-defined. We consider the sums over $n<0$ and $n \geq 0$ separately: in the former case since the factor $(1 + \epsilon_0)^{\frac{5n}{2}}$ is already summable for $n<0$, we may crudely estimate terms of the form $\langle v, \tilde{\psi}^{\rm per}_{i, n}\rangle_{L^2}$ by $\|v\|_{H^{\kappa}}$ for any $\kappa \geq 0$ using that the functions $\tilde{\psi}^{\rm per}_{i, n}$ are $L^2$-normalized. In the case of $n \in \mathbb{N}_0$, instead estimate via the observation \eqref{uINest2} to compensate the in this case diverging factor $(1 + \epsilon_0)^{\frac{5n}{2}}$.\\
	Violation of (H1):\\
	Let $u_1, u_2 \in H^{1-\eta}(\mathbb{T}^3)$ and $\phi \in H^1(\mathbb{T}^3)$, then in view of $|\langle F(u_1) - F(u_2), \phi \rangle|$ we estimate the summands in
	\begin{equation*}
		\begin{aligned}
			 &\sum_{n,i,\mu} \left|\alpha_{i,\mu} (1+\epsilon_0)^{\frac{5n}{2}}\langle \tilde{\psi}^{\rm per}_{i_3, n+\mu_3}, \phi \rangle\right.\\
			&\qquad\left.\left( \langle u_1 - u_2, \tilde{\psi}^{\rm per}_{i_1, n+\mu_1}\rangle_{L^2} \langle u_1, \tilde{\psi}^{\rm per}_{i_2, n+ \mu_2}\rangle_{L^2}  \right.\right.\\
			&\qquad\qquad \left.\left.+ \langle u_2, \tilde{\psi}^{\rm per}_{i_1, n+\mu_1}\rangle_{L^2} \langle u_1 - u_2, \tilde{\psi}^{\rm per}_{i_2, n+\mu_2}\rangle_{L^2} \right)\right|.
		\end{aligned}
	\end{equation*}
	For $n \in \mathbb{N}_0$, we estimate
	\begin{equation}
		\begin{aligned}
			&\left|(1 + \epsilon_0)^{\frac{5n}{2}} \langle u_1 - u_2, \tilde{\psi}_{i_1, n + \mu_1}^{\rm per} \rangle_{L^2} \langle u_1, \tilde{\psi}^{\rm per}_{i_2, n + \mu_2}\rangle_{L^2} \langle \tilde{\psi}^{\rm per}_{i_3, n + \mu_3}, \phi \rangle\right|\\
			&\lesssim \left(1+ 4\pi^2(1+\epsilon_0)^{2n}\right)^{\frac{1}{4}-\eta}\|u_1 -u_2\|_{H^{1-\eta}}\|u_1\|_{H^{1-\eta}}\|\phi\|_{H^1}
		\end{aligned}
	\end{equation}
	which is summable for $\eta < \frac{1}{4}$. However in view of the second claim on $\|F(u)\|_{H^{-1}(\mathbb{T}^3)}$, we obtain
	\begin{equation}
		\begin{aligned}
			&\left|(1 + \epsilon_0)^{\frac{5n}{2}} \langle u, \tilde{\psi}_{i_1, n + \mu_1}^{\rm per} \rangle_{L^2} \langle u, \tilde{\psi}^{\rm per}_{i_2, n + \mu_2}\rangle_{L^2} \langle \tilde{\psi}^{\rm per}_{i_3, n + \mu_3}, \phi \rangle\right|\\
			&\lesssim \left(1+ 4\pi^2(1+\epsilon_0)^{2n}\right)^{\frac{1}{4}}\|u\|_{L^2}\|u\|_{H^1}\|\phi\|_{H^1}
		\end{aligned}
	\end{equation}
	which is not summable over $n \in \mathbb{N}_0$.\\
	Violation of (H2):\\
	Observe that even the more general form (H2') in Remark \ref{rem:hypFGL21}, (3), is not satisfied: let $\alpha, \beta, \gamma \in [0,1]$, then interpolation gives an estimate of the form
	\[\left(1+ 4\pi^2(1+\epsilon_0)^{2n}\right)^{\frac{5}{4}-\frac{1}{2}(\alpha + \beta +\gamma)}\|u\|^{\frac{3-(\alpha + \beta + \gamma)}{1+\delta}}_{H^{-\delta}}\|u\|^{\frac{3\delta + \alpha + \beta + \gamma}{1+\delta}}_{H^1}\|\phi\|_{H^1}\]
	for which (H2') requires
	\[\frac{3\delta +\alpha + \beta + \gamma}{1+\delta} \in (0,2)\]
	whereas for summability we need $\alpha + \beta +\gamma > \frac{5}{2}$ yielding
	\[\frac{3\delta + \alpha + \beta + \gamma}{1+\delta} > 3 - \frac{1}{2(1+\delta)} > 2.\]
	Violation of (H3):\\
	We show that also here the more general form (H3') (see Remark \ref{rem:hypFGL21}, (4)) is violated: let $\gamma \in [0,1]$, then we first estimate
	\begin{equation}\label{eq:estH3}
	|\langle u_1 - u_2, F(u_1) - F(u_2)\rangle| \leq \|u_1 -u_2 \|_{H^{\gamma}}\|F(u_1) - F(u_2) \|_{H^{-\gamma}}.
	\end{equation}
	Let $\alpha, \beta \in [0,1]$, then similar to our analysis for (H1) we estimate via interpolation
	\begin{equation}
		\begin{aligned}
			&\left|(1 + \epsilon_0)^{\frac{5n}{2}} \langle u_1 - u_2, \tilde{\psi}_{i_1, n + \mu_1}^{\rm per} \rangle_{L^2} \langle u_1, \tilde{\psi}^{\rm per}_{i_2, n + \mu_2}\rangle_{L^2} \langle \tilde{\psi}^{\rm per}_{i_3, n + \mu_3}, \phi \rangle\right|\\
			&\lesssim \left(1+ 4\pi^2(1+\epsilon_0)^{2n}\right)^{\frac{5}{4}-\frac{1}{2}(\alpha + \beta +\gamma)}\\
			&\quad\|u_1 -u_2\|^{1-\alpha}_{L^2}\|u_1 -u_2\|^{\alpha}_{H^1}\|u_1\|^{1-\beta}_{L^2}\|u_1\|^{\beta}_{H^1}\|\phi\|_{H^1}
		\end{aligned}
	\end{equation}
	hence summability requires again $\alpha + \beta +\gamma > \frac{5}{2}$. Together with \eqref{eq:estH3} we obtain a total estimate of the form
	\begin{equation}
		\begin{aligned}
			&|\langle u_1 - u_2, F(u_1) - F(u_2)\rangle|\\
			&\lesssim \|u_1 -u_2\|^{2-(\alpha+\gamma)}_{L^2}\|u_1 -u_2\|^{\alpha+\gamma}_{H^1}\left(\|u_1\|^{1-\beta}_{L^2}\|u_1\|^{\beta}_{H^1} + \|u_2\|^{1-\beta}_{L^2}\|u_2\|^{\beta}_{H^1}\right).
		\end{aligned}
	\end{equation}
	Hypotheses (H3') hence requires in particular that
	\[ (\alpha +\gamma) + \beta_2 \leq 2\]
	which however violates the above summability condition
	\[ 0> \frac{5}{4}-\frac{1}{2}(\alpha + \beta +\gamma) \geq \frac{1}{4}.\]
\end{proof}
\noindent
The take-away message from this proof is that though at first sight the cascade operators are of seemingly simple structure, it is the factor $(1 + \epsilon_0)^{\frac{5n}{2}}$ that dictates whether one may deduce the desired estimates. Note that this factor encodes the relation of the cascade operator to the Euler bilinear operator $B$ (at least in a dyadic framework as in \cite{katz2005}) and mimics its scaling behaviour. Furthermore our analysis works irrespective of the precise form of the coefficients $\alpha_{i,\mu}$ whereas in \cite{tao2016} these parameters are carefully chosen so as to facilitate the blow-up.
\begin{remark}
A similar behaviour can be observed in the case of standard NSE: consider
\begin{equation}
	\partial_t u = \Delta u + F(u)
\end{equation}
with $F(u) = B(u,u)=-\Pi((u\cdot \nabla)u)$, then we may investigate the hypotheses with the help of \cite[Lemma 2.1]{temam1995} stating that
\[|\langle B(u,v), w\rangle| \lesssim \|u\|_{H^{m_1}}\|v\|_{H^{m_2+1}}\|w\|_{H^{m_3}}\]
where
\begin{equation}\label{eq:requTemam}
\frac{3}{2} \leq m_1 + m_2 + m_3,\quad 0\leq m_i \neq \frac{3}{2}, i=1,2,3.
\end{equation}
Violation of (H1) follows immediately. For (H2') observe that by interpolation and using $m_2 =0$ we may estimate
\begin{equation}
	|\langle B(u,u),u\rangle| \leq \|u\|_{H^{-\delta}}^{\frac{2-(m_1+m_3)}{1+\delta}}\|u\|_{H^1}^{1+\frac{m_1+m_3+2\delta}{1+\delta}}
\end{equation}
where (H2') requires
\[1+\frac{m_1+m_3+2\delta}{1+\delta} < 2 \quad \Rightarrow \quad m_1 + m_3 < 1 - \delta\]
which is in conflict with the requirement \eqref{eq:requTemam}. In the case of (H3'), for two divergence-free vector fields $u_1, u_2$ we use the identity
\begin{equation}\label{eq:stanNSEident}
	|\langle u_1-u_2, F(u_1)- F(u_2) \rangle| = |\langle u_1-u_2, (u_1\cdot \nabla)(u_1-u_2) + ((u_1-u_2)\cdot \nabla)u_2 \rangle|.
\end{equation}
Then by \cite[Lemma 2.1]{temam1995} we estimate
\begin{equation}
	|\langle (u_1\cdot \nabla)(u_1-u_2) , u_1-u_2 \rangle |\leq \|u_1\|_{H^{m_1}}\|u_1-u_2\|_{H^{m_2+1}}\|u_1-u_2\|_{H^{m_3}}
\end{equation}
hence we require $m_2 =0$. By interpolation it holds
\begin{equation}
	|\langle (u_1\cdot \nabla)(u_1-u_2) , u_1-u_2 \rangle |\leq \|u_1\|_{L^2}^{1-m_1}\|u_1\|_{H^1}^{m_1}\|u_1-u_2\|_{L^2}^{1-m_3}\|u_1-u_2\|_{H^1}^{1+m_3}
\end{equation}
where (H3') requires
\[(1+m_3) + m_1 \leq 2 \quad \Rightarrow \quad m_1 + m_3 \leq 1\]
violating \eqref{eq:requTemam}. One proceeds similarly for the second summand.
\end{remark}

\subsubsection{Order of well-posed derivatives}
\noindent
Denote $v := ({\rm Id} - \Delta)^{\rho} u$, then $v$ satisfies
\begin{equation}\label{eq:sysRho}
	\partial_t v = \Delta v + F_{\rho}(v)
\end{equation}
where
\begin{equation}
	F_{\rho}(v) := ({\rm Id}-\Delta)^{\rho} F(u).
\end{equation}
This section shall discuss the minimal threshold value of $\rho >0$ for which $F_{\rho}$ is well-defined in the sense that
\begin{equation}
	F_{\rho}(v) = \sum_{n , i, \mu}\alpha_{i,\mu} (1+\epsilon_0)^{\frac{5n}{2}}\langle u, \tilde{\psi}_{i_1, n + \mu_1}^{\rm per} \rangle_{L^2} \langle u, \tilde{\psi}^{\rm per}_{i_2, n + \mu_2}\rangle_{L^2} ({\rm Id}-\Delta)^{\rho} \tilde{\psi}^{\rm per}_{i_3, n + \mu_3}
\end{equation}
and attains the hypotheses from Section \ref{subsubsec:criteriaBlowUp}. First recall that it holds
\begin{equation}\label{eq:estV}
	\|v\|_{H^{\gamma}} =\left\|({\rm Id} - \Delta)^{\frac{\gamma}{2}}v\right\|_{L^2}=\left\|({\rm Id}-\Delta)^{\rho + \frac{\gamma}{2}}u\right\|_{L^2} = \|u\|_{H^{2\rho+\gamma}}
\end{equation}
and denote $v_i= ({\rm Id}-\Delta)^{\rho}u_i$, $i=1,2$. 
\begin{lemma}
	$F_{\rho}$ satisfies (H1), (H2') and (H3') if $\rho > \frac{1}{8}$.
\end{lemma}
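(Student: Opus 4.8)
The plan is to establish all three estimates by the same mechanism used in the proof of Lemma~\ref{lem:violation}, now exploiting the regularity gain encoded in $v=({\rm Id}-\Delta)^{\rho}u$. Writing $u=({\rm Id}-\Delta)^{-\rho}v$, every summand of $F_\rho(v)$ carries two input pairings $\langle u,\tilde\psi^{\rm per}_{i_j,n+\mu_j}\rangle=\langle v,({\rm Id}-\Delta)^{-\rho}\tilde\psi^{\rm per}_{i_j,n+\mu_j}\rangle$ and one output wavelet $({\rm Id}-\Delta)^{\rho}\tilde\psi^{\rm per}_{i_3,n+\mu_3}$. Using the frequency localization of the $\tilde\psi^{\rm per}_{i,n}$ at scale $(1+\epsilon_0)^n$ together with \eqref{uINest2} and \eqref{eq:estV}, each input pairing is controlled by a negative power of $(1+(1+\epsilon_0)^{2n})$ times a Sobolev norm of $v$, while the output wavelet contributes a positive power in the relevant negative norm. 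The two smoothing factors from the inputs outweigh the single roughening factor from the output, producing a net gain $(1+\epsilon_0)^{-2\rho n}$ that competes against the intrinsic growth $(1+\epsilon_0)^{5n/2}$ of the cascade coefficients. First I would record, once and for all, the elementary bound $|\langle v,({\rm Id}-\Delta)^{-\rho}\tilde\psi^{\rm per}_{i,n}\rangle|\lesssim(1+(1+\epsilon_0)^{2n})^{-(s+2\rho)/2}\|v\|_{H^s}$ valid for every $s\in\mathbb{R}$, and its counterpart for the output, since these reduce each hypothesis to the summability of a single geometric series in $n$ (the contributions of $n<0$ being harmless thanks to the already summable factor $(1+\epsilon_0)^{5n/2}$, so that only $n\geq 0$ is at stake).

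For (H1) I would pass to the dual formulation $\|F_\rho(v)\|_{H^{-1}}=\sup_{\|\phi\|_{H^1}\le 1}|\langle F_\rho(v),\phi\rangle|$ and place one input at $\|v\|_{L^2}$, the other at $\|v\|_{H^1}$, and the output against $\phi\in H^1$; collecting the powers of $(1+\epsilon_0)^{n}$ leaves a geometric series whose summability fixes the admissible range of $\rho$, and the continuity of $F_\rho\colon H^{1-\eta}\to H^{-1}$ then follows from the same bilinear bound by a routine telescoping in the two arguments. For (H2') I would estimate the cubic form $\langle F_\rho(v),v\rangle$ by distributing the three $v$-factors between $H^{-\delta}$ and $H^1$ through interpolation exactly as in Remark~\ref{rem:hypFGL21}~(3), so that the exponent $\tilde\gamma_2$ of $\|v\|_{H^1}$ is governed by the total number of derivatives one is forced to spend to make the $n$-series converge; the requirement $\tilde\gamma_2<2$ then becomes a lower bound on $\rho$. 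For (H3') I would use bilinearity to write $F_\rho(v_1)-F_\rho(v_2)=\tilde C(v_1-v_2,v_1+v_2)$ with $\tilde C$ the symmetric form attached to $F_\rho$, reducing $|\langle v_1-v_2,F_\rho(v_1)-F_\rho(v_2)\rangle|$ to a trilinear expression with two difference factors and one sum factor; assigning interpolation weights $\theta_1,\theta_2$ to the differences (so that $\gamma_3=\theta_1+\theta_2$, $\beta_3=2-\theta_1-\theta_2$, whence $\beta_3+\gamma_3\ge 2$ holds automatically) and $\kappa=\theta_3$ to the remaining factor, the cap $\gamma_3+\kappa\le 2$ from (H3') must be reconciled with the summability threshold.

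The main obstacle, and the step that pins down the value of $\rho$, is precisely this reconciliation: in each case the $n$-series converges only once the total number of derivatives loaded onto the high Sobolev norms exceeds the threshold dictated by the cascade scaling $(1+\epsilon_0)^{5n/2}$ together with the net gain $(1+\epsilon_0)^{-2\rho n}$, whereas the structural inequalities built into (H1), (H2') and (H3') — degree one in $\|v\|_{H^1}$, $\tilde\gamma_2<2$, and $\gamma_3+\kappa\le 2$ respectively — cap that same number of derivatives from above. The stated threshold is the smallest $\rho$ for which the lower bound forced by convergence still fits under these caps; I expect the binding constraint to come from the top-order balance, equivalently from the fact that $2\rho$ additional derivatives render the cascade operator's NSE-type scaling subcritical, with (H2') and (H3') following from the same arithmetic after the interpolation weights are optimized. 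The only genuinely delicate bookkeeping is tracking how the interpolation exponents interact with the finite shifts $\mu$ and the finitely many indices $i$, which, being uniformly bounded, do not affect convergence and may be absorbed into the implicit constants.
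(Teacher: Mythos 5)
Your overall strategy coincides with the paper's (split $n<0$ from $n\ge 0$, control the two input pairings through the frequency localization \eqref{uINest2}, control the output wavelet in a dual Sobolev norm, interpolate for (H2') and (H3')), but there is a quantitative gap that is fatal for the stated constant: the net gain you derive, $(1+\epsilon_0)^{-2\rho n}$, does not yield the threshold $\rho>\frac{1}{8}$. For (H1), the $\rho=0$ computation in the proof of Lemma \ref{lem:violation} produces the exponent $\left(1+4\pi^2(1+\epsilon_0)^{2n}\right)^{\frac{1}{4}}$; multiplying by your net gain $\left(1+4\pi^2(1+\epsilon_0)^{2n}\right)^{-\rho}$ gives summability over $n\ge 0$ if and only if $\rho>\frac{1}{4}$. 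The same arithmetic recurs in (H2') (for small $\delta$) and in (H3'): there, summability under your accounting forces $\alpha+\beta+\gamma>\frac{5}{2}+2\rho$, while the structural cap $\gamma_3+\kappa\le 2$ forces $\alpha+\beta+\gamma\le 2+4\rho$, and these are compatible only when $\rho>\frac{1}{4}$. So the mechanism you describe proves the lemma for $\rho>\frac{1}{4}$, and your closing claim that the stated threshold drops out of it is unjustified; no optimization of interpolation weights can close this, because the obstruction is the single roughening factor on the output, which is independent of how the Sobolev indices are distributed among the three factors.

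The reason the paper reaches $\frac{1}{8}$ is that its proof credits the $2\rho$-gain on each input but never debits the $2\rho$-loss on the output: in its estimate for (H1), claim 2, the pairing $\langle({\rm Id}-\Delta)^{\rho}\tilde{\psi}^{\rm per}_{i_3,n+\mu_3},\phi\rangle$ is bounded as though it cost only $\left(1+4\pi^2(1+\epsilon_0)^{2n}\right)^{-\frac{1}{2}}\|\phi\|_{H^1}$, producing the total exponent $\frac{5}{4}-(2\rho+1)$. But the supremum of that pairing over $\|\phi\|_{H^1}\le 1$ equals $\|\tilde{\psi}^{\rm per}_{i_3,n+\mu_3}\|_{H^{2\rho-1}}\sim\left(1+4\pi^2(1+\epsilon_0)^{2n}\right)^{\rho-\frac{1}{2}}$, since the wavelet is $L^2$-normalized with Fourier support at $|k|\sim(1+\epsilon_0)^{n}$; the loss is sharp, so it cannot be avoided by moving the derivatives onto $\phi$. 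Your bookkeeping is therefore the defensible one, and the honest conclusion of your argument is the lemma with threshold $\frac{1}{4}$ in place of $\frac{1}{8}$ (which, incidentally, costs nothing downstream: the subsequent lemma on (H4) requires $\rho>\frac{1}{4}$ anyway, and the final application takes $\rho=5$). As written, however, your proposal asserts a constant that its own estimates contradict, and that discrepancy must be resolved — either by proving the statement with $\frac{1}{4}$ and flagging the difference, or by exhibiting an estimate of the output pairing that genuinely avoids the $\rho$-loss, which the sharpness computation above rules out within this scheme.
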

\begin{proof}
	In case of (H1), claim 2, we estimate for $n \in \mathbb{N}_0$
	\begin{equation}
		\begin{aligned}
			&\left|(1+\epsilon_0)^{\frac{5n}{2}}\langle  u,\tilde{\psi}^{\rm per}_{i_1, n+\mu_1}\rangle_{L^2} \langle  u,\tilde{\psi}^{\rm per}_{i_2, n+\mu_2}\rangle_{L^2} \langle  ({\rm Id}-\Delta)^{\rho}\tilde{\psi}^{\rm per}_{i_3, n+\mu_3}, \phi \rangle_{L^2}\right|\\
			& \lesssim \left(1+4\pi^2(1+\epsilon_0)^{2n}\right)^{\frac{5}{4}-(2\rho +1)}\|u\|_{H^{2\rho}}\|u\|_{H^{2\rho +1}}\|\phi\|_{H^1}
		\end{aligned}
	\end{equation}
	which is summable if
	\[\rho > \frac{1}{8}.\]
	For (H2') we use interpolation as in the proof of Lemma \ref{lem:violation} to obtain the estimate
	\begin{equation}
		\begin{aligned}
			&\left|(1 + \epsilon_0)^{\frac{5n}{2}} \langle u, \tilde{\psi}_{i_1, n + \mu_1}^{\rm per} \rangle_{L^2} \langle u, \tilde{\psi}^{\rm per}_{i_2, n + \mu_2}\rangle_{L^2 } \langle ({\rm Id}-\Delta)^{\rho} \tilde{\psi}^{\rm per}_{i_3, n + \mu_3}, u \rangle\right|\\
			&\lesssim \left(1+ 4\pi^2(1+\epsilon_0)^{2n}\right)^{\frac{5}{4}-\frac{1}{2}(\alpha + \beta +\gamma)}\|u\|_{H^{\alpha} }\|u\|_{H^{\beta}}\|({\rm Id}-\Delta)^{\rho}u\|_{H^{\gamma} }\\
			&\leq \left(1+ 4\pi^2(1+\epsilon_0)^{2n}\right)^{\frac{5}{4}-\frac{1}{2}(\alpha + \beta +\gamma)}\|v\|^{\frac{3-(\alpha + \beta + \gamma-4\rho)}{1+\delta}}_{H^{-\delta}}\|u\|^{\frac{3\delta + \alpha + \beta + \gamma-4\rho}{1+\delta}}_{H^1}
		\end{aligned}
	\end{equation}
	which is summable if $\alpha + \beta +\gamma > \frac{5}{2}$ and satisfies the requirements of (H2') if
	\[ \rho > \frac{1}{4}\left( \delta + \alpha + \beta + \gamma -2\right) > \frac{1}{4}\left(\delta + \frac{1}{2}\right) > \frac{1}{8}.\]
	Finally for (H3'), let $\gamma \in [0,1]$, then we first estimate
	\begin{equation}\label{eq:estH3v}
		|\langle v_1 - v_2, F_{\rho}(v_1) - F_{\rho}(v_2)\rangle| \leq \|v_1 -v_2 \|_{H^{\gamma}}\|F_{\rho}(v_1) - F_{\rho}(v_2) \|_{H^{-\gamma}}.
	\end{equation}
	We continue as in the proof of Lemma \ref{lem:violation}: in the case of $n \in \mathbb{N}_0$, let $\alpha$ and $\beta$ be such that $\alpha - 2 \rho, \beta - 2\rho \in [0,1]$. Then via interpolation we obtain for $\phi \in H^{\gamma}(\mathbb{T}^3)$
	\begin{equation}
		\begin{aligned}
			&\left|(1+\epsilon_0)^{\frac{5n}{2}}\langle u_1 - u_2, \tilde{\psi}^{\rm per}_{i_1, n+\mu_1}\rangle_{L^2} \langle u_1, \tilde{\psi}^{\rm per}_{i_2, n+ \mu_2}\rangle_{L^2}\langle  ({\rm Id}-\Delta)^{\rho}\tilde{\psi}^{\rm per}_{i_3, n+\mu_3}, \phi \rangle\right|\\
			&\lesssim \left(1+4\pi^2(1+\epsilon_0)^{2n}\right)^{\frac{5}{4}-\frac{1}{2}(\alpha + \beta+\gamma-2\rho)}\|u_1 - u_2 \|_{H^{\alpha}} \|u_1\|_{H^{\beta}} \|\phi\|_{H^{\gamma}}\\
			&\leq \left(1+4\pi^2(1+\epsilon_0)^{2n}\right)^{\frac{5}{4}-\frac{1}{2}(\alpha + \beta+\gamma-2\rho)}\\
			&\quad\|v_1 - v_2 \|^{1-(\alpha-2\rho)}_{L^2} \|v_1-v_2\|^{\alpha -2\rho}_{H^1}\|v_1\|^{1-(\beta -2\rho)}_{L^2}\|v_1\|^{\beta -2\rho}_{H^1} \|\phi\|_{H^{\gamma}}
		\end{aligned}
	\end{equation}
	which is summable if $\alpha + \beta +\gamma > \frac{5}{2}$. Thus we obtain
	\begin{equation}
		\begin{aligned}
			&|\langle v_1 - v_2, F_{\rho}(v_1) - F_{\rho}(v_2)\rangle|\\
			&\lesssim \|v_1 - v_2 \|^{2-(\alpha+\gamma-2\rho)}_{L^2} \|v_1-v_2\|^{\alpha+\gamma -2\rho}_{H^1}\\
			&\quad\left(\|v_1\|^{1-(\beta -2\rho)}_{L^2}\|v_1\|^{\beta -2\rho}_{H^1} + \|v_2\|^{1-(\beta -2\rho)}_{L^2}\|v_2\|^{\beta -2\rho}_{H^1}\right)
		\end{aligned}
	\end{equation}
	which satisfies the requirements in (H3') if
		\[ 2(1-2\rho) \geq \alpha + \beta +\gamma > \frac{5}{2} \quad \Rightarrow \quad \rho > \frac{1}{8}.\]
\end{proof}
\begin{remark}
	In case of the standard NSE, using \cite[Lemma 2.1]{temam1995} we estimate for (H1), claim 2,
	\begin{equation}
		\begin{aligned}
			|\langle ({\rm Id}-\Delta)^{\rho} B(u,u), \phi\rangle | &\leq \|u\|_{H^{m_1}}\|u\|_{H^{m_2+1}}\|({\rm Id}-\Delta)^{\rho}\phi\|_{H^{m_3}} \\
			&= \|v\|_{H^{m_1-2\rho}}\|v\|_{H^{m_2+1-2\rho}}\|\phi\|_{H^{m_3+2\rho}}.
		\end{aligned}
	\end{equation}
	Hence we require $m_3 = 1-2\rho$. Since claim 2 in (H1) requires for an estimate involving the $L^2(\mathbb{T}^3)$- and the $H^1(\mathbb{T}^3)$-norm, assume $m_1 - 2\rho \in [0,1], m_2+1-2\rho \in [0,1]$. Via interpolation we obtain
	\[ \|v\|_{H^{m_1-2\rho}}\|v\|_{H^{m_2+1-2\rho}} \lesssim \|v\|_{L^2}^{1+4\rho - (m_1+m_2)}\|v\|_{H^1}^{1-4\rho +m_1 + m_2}\]
	and claim 2 in (H1) requires
	\[ 1-4\rho +m_1 + m_2 = 1 \quad \Rightarrow \quad 4\rho = m_1 + m_2.\]
	Further with the requirement in \cite[Lemma 2.1]{temam1995} we obtain
	\[ \frac{3}{2} \leq m_1 + m_2 + m_3 = 1 + 2\rho \quad \Rightarrow \quad \frac{1}{4} \leq \rho.\]
	This threshold equally holds in the case of (H2') and (H3') via a similar analysis.
\end{remark}
In terms of regularizability as specified by Theorem \ref{thm:FGL21divfree}, we finally need to check whether also (H4) is satisfied:
\begin{lemma}
	If $\rho > \frac{1}{4}$, then we obtain (H4) with $\mathcal{K}$ as in Remark \ref{rem:hypFGL21}, \eqref{eq:H4K}.
\end{lemma}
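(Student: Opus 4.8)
The plan is to read (H4) directly for the change-of-variables system \eqref{eq:sysRho}: with $\alpha=1$ the Cauchy problem in (H4) becomes
\[
\partial_t v = (1+\nu)\Delta v + F_\rho(v), \qquad v(0,\cdot)=v_0\in\mathcal{K}\cap\mathcal{D},
\]
where $\mathcal{K}$ is the radius-$R$ ball of mean-zero $L^2(\mathbb{T}^3)$-fields from \eqref{eq:H4K}. Since each $\tilde{\psi}^{\rm per}_{i,n}$ has Fourier support in the annulus away from the origin and $({\rm Id}-\Delta)^\rho$ preserves both the zero Fourier mode and the divergence-free condition, $F_\rho$ maps $\mathcal{D}$ into mean-zero divergence-free fields; hence the zero mode of $v$ is frozen and $v(t)$ remains mean-zero and divergence-free, so the Poincaré inequalities $\|v\|_{H^1}^2\le c_P\|\nabla v\|_{L^2}^2$ and $\|\nabla v\|_{L^2}^2\ge \lambda_1\|v\|_{L^2}^2$ hold along the flow. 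Local existence and uniqueness of $v$ are already furnished by (H1) and (H3'), valid for $\rho>\tfrac18$ by the preceding lemma, so (H4) reduces to an a priori bound on $\|v(t)\|_{L^2}$ that is uniform on $[0,T]$ and over $v_0\in\mathcal{K}$.

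Testing the equation with $v$ gives the energy identity
\[
\tfrac12\tfrac{d}{dt}\|v\|_{L^2}^2 = -(1+\nu)\|\nabla v\|_{L^2}^2 + \langle F_\rho(v),v\rangle,
\]
and the core step is to estimate $\langle F_\rho(v),v\rangle$ in the $L^2$--$H^1$ scale. Writing the pairing as the series and treating $n<0$ crudely (there $(1+\epsilon_0)^{5n/2}$ is summable and every factor is bounded by $\|v\|_{L^2}$, yielding a contribution $\lesssim\|v\|_{L^2}^3$), the work is the sum over $n\ge 0$. I would bound the two inner products $\langle u,\tilde{\psi}^{\rm per}_{i_\bullet,\cdot}\rangle$ and the term $\langle ({\rm Id}-\Delta)^\rho\tilde{\psi}^{\rm per}_{i_3,\cdot},v\rangle=\langle({\rm Id}-\Delta)^{2\rho}\tilde{\psi}^{\rm per}_{i_3,\cdot},u\rangle$ via the localization estimate \eqref{uINest2}, assigning $u$-Sobolev orders $\alpha',\beta',\gamma'$ to the three factors and then using \eqref{eq:estV} to pass to $v$. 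Summability of the geometric factor requires $\alpha'+\beta'+\gamma'>\tfrac52+4\rho$, while interpolating each factor between $L^2$ and $H^1$ of $v$ produces a total $H^1$-weight $\sigma=(\alpha'+\beta'+\gamma')-6\rho$, so that
\[
|\langle F_\rho(v),v\rangle| \lesssim \|v\|_{L^2}^{3-\sigma}\|v\|_{H^1}^{\sigma}.
\]
This is where $\rho>\tfrac14$ is decisive: summability forces $\sigma>\tfrac52-2\rho$, and one can realize an admissible exponent with $\sigma<2$ (as the Young step below requires) precisely when $\tfrac52-2\rho<2$, i.e. $\rho>\tfrac14$; the remaining constraints $\alpha'-2\rho,\beta'-2\rho,\gamma'-2\rho\in[0,1]$ are then still satisfiable.

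With $\sigma\in(\tfrac52-2\rho,2)$ fixed, I would close the estimate using the extra dissipation. Young's inequality together with $\|v\|_{H^1}^2\le c_P\|\nabla v\|_{L^2}^2$ gives $\|v\|_{L^2}^{3-\sigma}\|v\|_{H^1}^{\sigma}\le \tfrac12(1+\nu)\|\nabla v\|_{L^2}^2 + C_\nu\|v\|_{L^2}^{2s}$ with $s=\tfrac{3-\sigma}{2-\sigma}$ and a constant $C_\nu\sim(1+\nu)^{-\sigma/(2-\sigma)}\to 0$ as $\nu\to\infty$; absorbing the gradient term and applying Poincaré to the remaining half of the dissipation yields, for $y(t):=\|v(t)\|_{L^2}^2$,
\[
y' \le -(1+\nu)\lambda_1\, y + 2C_\nu\, y^{s}.
\]
This Bernoulli-type inequality has a bounded invariant region $[0,y^\ast)$ with threshold $y^\ast=\big((1+\nu)\lambda_1/(2C_\nu)\big)^{1/(s-1)}$, and since $(1+\nu)\lambda_1\to\infty$ while $C_\nu\to 0$ we have $y^\ast\to\infty$ as $\nu\to\infty$. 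Hence, given $T$, I would choose $\nu=\nu(T,R)$ so large that $y^\ast>R^2$; then every solution with $y(0)=\|v_0\|_{L^2}^2\le R^2<y^\ast$ satisfies $y(t)\le R^2$ for all $t\ge0$. This simultaneously excludes blow-up (so the solution is global, with $v\in L^2(0,T;H^1)\cap C([0,T];L^2)$ following by integrating the energy identity) and delivers $\sup_{v_0\in\mathcal{K}}\sup_{t\in[0,T]}\|v(t)\|_{L^2}\le R$, which is exactly (H4).

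The main obstacle is the nonlinear estimate: distributing the Sobolev orders across the three localized factors and tracking the $2\rho$-gain from \eqref{eq:estV} so as to exhibit an attainable $H^1$-weight $\sigma<2$ if and only if $\rho>\tfrac14$. The ensuing ODE comparison is routine, the only delicate point being that for superlinear $s>1$ one must place the whole ball $\mathcal{K}$ inside the basin of the bounded branch, which is precisely what the choice of large $\nu$ accomplishes.
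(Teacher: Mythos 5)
Your proposal is correct in substance, but it takes a genuinely different route from the paper. The paper's own proof is short and structural: by Remark~\ref{rem:hypFGL21}(2), hypothesis (H4) with $\mathcal{K}$ as in \eqref{eq:H4K} follows once $F_\rho$ is shown to preserve the space of mean-zero functions, so the paper only checks that each $({\rm Id}-\Delta)^{\rho}\tilde\psi^{\rm per}_{i_3,n+\mu_3}$ has vanishing mean (its Fourier transform is supported away from the origin) and then justifies exchanging $\int_{\mathbb{T}^3}$ with $\sum_n$ via \eqref{uINest2}, the termwise bound of order $\left(1+4\pi^2(1+\epsilon_0)^{2n}\right)^{\frac14-\rho}$ being summable exactly for $\rho>\frac14$; all of the global-existence and uniform-bound content of (H4) is delegated to the cited remark of \cite{flandoli2021b}. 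You instead prove (H4) by hand: mean-zero preservation (hence Poincar\'e along the flow), the growth estimate $|\langle F_\rho(v),v\rangle|\lesssim\|v\|_{L^2}^{3}+\|v\|_{L^2}^{3-\sigma}\|v\|_{H^1}^{\sigma}$ with $\sigma\in\left(\tfrac52-2\rho,\,2\right)$ nonempty iff $\rho>\frac14$ (your exponent bookkeeping --- summability iff $\alpha'+\beta'+\gamma'>\tfrac52+4\rho$, $H^1$-weight $\sigma=\alpha'+\beta'+\gamma'-6\rho$ --- is correct), then Young, Poincar\'e, and an invariant-region ODE comparison with $\nu$ large. Your route is self-contained where the paper's is not, produces $\nu=\nu(R)$ uniform in $T$, and locates the threshold $\frac14$ in the energy mechanism itself: in effect you prove the $v$-paired analogue of (H2), which the paper never states (its earlier (H2') estimate pairs $F_\rho(v)$ against $u$ rather than $v$, which is why only $\rho>\frac18$ is needed there). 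The paper's route buys brevity, at the price of using the quoted remark of \cite{flandoli2021b} as a black box; notably, the two arguments arrive at the same threshold $\rho>\frac14$ for different technical reasons.

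Two small repairs to your write-up. First, the claim that $F_\rho$ maps into mean-zero fields requires the defining series to converge in some $H^{s}$ (this is available from (H1), already established for $\rho>\frac18$), since only then does the zero Fourier mode of the sum equal the sum of the zero modes; this is precisely the interchange issue the paper's proof is careful about. Second, your final differential inequality drops the $n<0$ contribution, which adds a term $Cy^{3/2}$ with $\nu$-independent constant; the invariant-region argument survives, since for fixed $R$ one still has $(1+\nu)\lambda_1 y>2C_\nu y^{s}+2Cy^{3/2}$ on $(0,R^2]$ for $\nu$ large, but the threshold $y^{\ast}$ and the choice of $\nu$ must account for this term. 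Finally, like the paper, you import local well-posedness from (H1)/(H3'), so ``self-contained'' holds only modulo that standard step.
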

\begin{proof}
In view of Remark \ref{rem:hypFGL21}, \eqref{eq:H4K}, we show that the system \eqref{eq:sysRho} preserves the set of mean-zero functions in $L^2(\mathbb{R}^3)$. More precisely we show that $F_{\rho}$ has zero mean: first observe that
\begin{equation}
	\begin{aligned}
		\int_{\mathbb{T}^3} \left( ({\rm Id}-\Delta)^{\rho} \tilde{\psi}^{\rm per}_{i_3, n + \mu_3}\right)(x){\rm d} x &= \frac{1}{2^3} \mathcal{F}_{\mathbb{T}^3}\left(({\rm Id}-\Delta)^{\rho} \tilde{\psi}^{\rm per}_{i_3, n + \mu_3}\right)(0)\\
		&= \frac{1}{2^3} \left((1+4\pi^2|\cdot|^2)^{\rho} \mathcal{F}_{\mathbb{T}^3}\tilde{\psi}^{\rm per}_{i_3, n + \mu_3}\right)(0)\\
		&= \frac{1}{2^3} \left(\mathcal{F}_{\mathbb{T}^3}\tilde{\psi}^{\rm per}_{i_3, n + \mu_3}\right)(0) =0
	\end{aligned}
\end{equation}
since $\mathcal{F}_{\mathbb{T}^3}\tilde{\psi}^{\rm per}_{i, n}$ are supported away from 0. In order to justify that we may interchange integration and derivatives with summation, further observe that it holds with the help of \eqref{uINest2}
\begin{equation}
	\begin{aligned}
		\int_{\mathbb{T}^3} \left|\left( ({\rm Id}-\Delta)^{\rho} \tilde{\psi}^{\rm per}_{i_3, n + \mu_3}\right)(x)\right|{\rm d} x &\leq \left\| \tilde{\psi}^{\rm per}_{i_3, n + \mu_3}\right\|_{H^{2\rho}}\\
		&\lesssim \left(1 + 4\pi^2 (1+\epsilon_0)^{2n}\right)^{\rho}.
	\end{aligned}
\end{equation}
Hence using $u(t) \in H^1(\mathbb{T}^3)$ together with \eqref{uINest2} we estimate
\begin{equation}
	\begin{aligned}
		&(1+\epsilon_0)^{\frac{5n}{2}}|\langle u, \tilde{\psi}_{i_1, n + \mu_1}^{\rm per} \rangle_{L^2}| |\langle u, \tilde{\psi}^{\rm per}_{i_2, n + \mu_2}\rangle_{L^2}|\int_{\mathbb{T}^3}\left| \left( ({\rm Id}-\Delta)^{\rho} \tilde{\psi}^{\rm per}_{i_3, n + \mu_3}\right)(x)\right|{\rm d} x\\
		&\lesssim \left(1 + 4\pi^2 (1+\epsilon_0)^{2n}\right)^{\frac{1}{4} - \rho}.
	\end{aligned}
\end{equation}
Hence for $\rho > \frac{1}{4}$ we justified
\begin{equation}
	\begin{aligned}
		\int_{\mathbb{T}^3} F_{\rho}(v)(x){\rm d}x &= \int_{\mathbb{T}^3} \sum_{n , i, \mu}\alpha_{i,\mu} (1+\epsilon_0)^{\frac{5n}{2}}\langle u, \tilde{\psi}_{i_1, n + \mu_1}^{\rm per} \rangle_{L^2} \langle u, \tilde{\psi}^{\rm per}_{i_2, n + \mu_2}\rangle_{L^2}\\
		&\hspace{2cm}\times\left(({\rm Id}-\Delta)^{\rho} \tilde{\psi}^{\rm per}_{i_3, n + \mu_3}\right)(x) {\rm d}x\\
		&= \sum_{n , i, \mu}\alpha_{i,\mu} (1+\epsilon_0)^{\frac{5n}{2}}\langle u, \tilde{\psi}_{i_1, n + \mu_1}^{\rm per} \rangle_{L^2} \langle u, \tilde{\psi}^{\rm per}_{i_2, n + \mu_2}\rangle_{L^2}\\
		&\hspace{1cm}\times\int_{\mathbb{T}^3} \left( ({\rm Id}-\Delta)^{\rho} \tilde{\psi}^{\rm per}_{i_3, n + \mu_3}\right)(x){\rm d} x =0.
	\end{aligned}
\end{equation}
\end{proof}
\subsection{Regularization of the periodic averaged NSE}
\noindent
Recall that in the case of the periodic averaged NSE we consider the system \eqref{eq:periodCauchy} given by
\begin{equation}
	\begin{aligned}
		\partial_t u & = \Delta u + C (u, u),\\
		u (0, \cdot) & = \tilde{\psi}^{\rm per}_{1, n_0},
	\end{aligned}
\end{equation}
and the corresponding blow-up result holds in the setting of $H_{\rm df}^{10}(\mathbb{T}^3)$. Let 
\[ v = ({\rm Id} - \Delta)^5u,\]
then on the one hand clearly it holds
\[ \|v(t)\|_{L^2} = \|({\rm Id} - \Delta)^5u(t)\|_{L^2} = \|u(t)\|_{H^{10}},\]
and on the other hand (H1)-(H4) hold as seen in the previous section. Theorem \ref{thm:FGL21divfree} therefore implies that for arbitrary large time horizon $T$ the solution to
\begin{equation}
	\begin{aligned}
		{\rm d} v &= (\Delta v + F_5(v)){\rm d} t + \frac{\sqrt{C_3 \nu}}{\|\theta\|_{\ell^2}}\sum_{k \in \mathbb{Z}_0^3}\sum_{i=1}^{2}\theta_k \Pi((\sigma_{k,i}\cdot\nabla)v) \circ {\rm d}W^{k,i},\\
		v(0,\cdot) &= ({\rm Id} - \Delta)^5\tilde{\psi}^{\rm per}_{1, n_0},
	\end{aligned}
\end{equation}
does not blow up in $C([0,T];L^2(\mathbb{T}^3))$ with high probability.

\appendix
\section{Proof of Theorem \ref{thm:FGL21divfree}}\label{app:proofThmDivfree}
\noindent
The proof closely follows \cite{flandoli2021b} making use of results from \cite{flandoli2021a}, thereby using the same or similar notation: let $T, R >0$ be fixed parameters. As in \cite{flandoli2021a} and \cite{flandoli2021b}, we first consider the cut-off equation
\begin{equation}\label{eq:StratDivfree}
	\begin{aligned}
		{\rm d}u &= \left( \Lambda^{2\alpha}u + g_R(u)F(u)\right){\rm d}t + \frac{\sqrt{C_d \nu}}{\|\theta\|_{\ell^2}}\sum_{k \in \mathbb{Z}_0^d}\sum_{i=1}^{d-1}\theta_k \Pi((\sigma_{k,i}\cdot\nabla)u)\circ {\rm d}W^{k,i},\\
		u(0, \cdot) &= u_0,
	\end{aligned}
\end{equation}
where we denote $\Lambda^{2\alpha} := (-\Delta)^{\alpha}$  and $g_R(u) := g_R\left(\|u\|_{H^{-\delta}}\right), \delta >0$, for a Lipschitz-continuous cut-off function $g_R:[0,\infty) \rightarrow [0,1]$ with $g_R(x) =1$ for $x \in [0,R]$, and $g_R(x) =0$ for $x >R+1$. Similarly to \cite{flandoli2021a} we obtain the corresponding It\^{o}-formulation
\begin{equation}\label{eq:ItoDivfree}
	\begin{aligned}
		{\rm d}u &= \left( \Lambda^{2\alpha}u + g_R(u)F(u)+S_{\theta}(u)\right){\rm d}t \\
		&\hspace{0.5cm}+ \frac{\sqrt{C_d \nu}}{\|\theta\|_{\ell^2}}\sum_{k \in \mathbb{Z}_0^d}\sum_{i=1}^{d-1}\theta_k \Pi((\sigma_{k,i}\cdot\nabla)u) {\rm d}W^{k,i},\\
		u(0, \cdot) &= u_0,
	\end{aligned}
\end{equation}
where
\begin{equation}
	S_{\theta}(u) := \frac{C_d \nu}{\|\theta\|^2_{\ell^2}} \sum_{k \in \mathbb{Z}_0^d}\sum_{i=1}^{d-1}\theta_k^2 \Pi\left((\sigma_{k,i}\cdot\nabla)\Pi((\sigma_{-k,i}\cdot\nabla)u)\right).
\end{equation}
\begin{definition}[cf. {\cite[Definition 1.1]{flandoli2021a}} and {\cite[Definition 3.1]{flandoli2021b}}]
	Let $(\Omega, \mathcal{F}, (\mathcal{F}_t), \mathbb{P})$ be a probability space with a family of Brownian motions $\left\{ W^{k,i} \right\}$ as in Section \ref{subsec:noise}. Further let $u_0 \in L^2(\mathbb{T}^d)$ be divergence-free. A process $u$ with trajectories in $C([0,T],L^2_{\rm df}(\mathbb{T}^d) \cap L^2(0,T;H^{\alpha}_{\rm df}(\mathbb{T}^d))$ is a strong solution to \eqref{eq:StratDivfree}, if it is $(\mathcal{F}_t)$-adapted and for any $\phi \in H^{\alpha}_{\rm df}(\mathbb{T}^d)$ $(\langle u(t), \Pi((\sigma_{k,i}\cdot\nabla)\phi)\rangle_{L^2(\mathbb{T}^d)})_t$ is an $(\mathcal{F}_t)$-continuous semimartingale, and with probability one it holds for all $t \in [0,T]$
	\begin{equation}
		\begin{aligned}
			&\langle u(t), \phi \rangle_{L^2(\mathbb{T}^d)} \\
			&= \langle u_0, \phi \rangle_{L^2(\mathbb{T}^d)} \\
			&\hspace{0,5cm}+ \int_0^t \langle -\Lambda^{2\alpha}u(s)+ g_R(u(s)) F(u(s)) + S_{\theta}(u(s)),\phi\rangle_{L^2(\mathbb{T}^d)}{\rm d}s\\
			&\hspace{0,5cm} + \frac{\sqrt{C_d \nu}}{\|\theta\|_{\ell^2}}\sum_{k \in \mathbb{Z}_0^d}\sum_{i=1}^{d-1}\theta_k \int_0^t\langle u(s), \Pi((\sigma_{k,i}\cdot\nabla)\phi)\rangle_{L^2(\mathbb{T}^d)} {\rm d}W^{k,i}.
		\end{aligned}
	\end{equation}
\end{definition}
In order to show existence of strong solutions in the sense of the above definition, we follow the standard agenda:
\begin{enumerate}
	\item Show existence of global weak solutions to 
	 \begin{equation}\label{eq:ItoCutoff}
	 	\begin{aligned}
	 	{\rm d}u &= \left(-\Lambda^{2\alpha} u + g_R(u)F(u) + S_{\theta}(u)\right){\rm d}t \\
		&\hspace{0.5cm}+\frac{\sqrt{C_d \nu}}{\|\theta\|_{\ell^2}}\sum_{k \in \mathbb{Z}_0^d}\sum_{i=1}^{d-1}\theta_k \Pi((\sigma_{k,i}\cdot\nabla)u) {\rm d}W^{k,i}.
		\end{aligned}
	 \end{equation}
	 \item Show pathwise uniqueness of weak solutions to \eqref{eq:ItoCutoff}.
	 \item Conclude via a Yamada-Watanabe argument.
\end{enumerate}
On (1): For $N \in \mathbb{N}$, let $H_N :=\{ \sigma_{k,i}: |k|\leq N, i=1,...,d-1\}$, and for $H$ the real subspace of $H_{\mathbb{C}}$ (defined in \eqref{eq:HC}) denote the corresponding orthogonal projection by $\Pi_N : H \rightarrow H_N$. Consider the Galerkin approximation of \eqref{eq:ItoCutoff}:
\begin{equation}\label{eq:ItoCutoffGalerkin}
	\begin{aligned}
	 	{\rm d}u_N &= \left(-\Lambda^{2\alpha} u_N + g_R(u_N)\Pi_NF(u_N) + S_{\theta}(u_N)\right){\rm d}t \\
		&\hspace{0.5cm}+\frac{\sqrt{C_d \nu}}{\|\theta\|_{\ell^2}}\sum_{k \in \mathbb{Z}_0^d}\sum_{i=1}^{d-1}\theta_k \Pi_N((\sigma_{k,i}\cdot\nabla)u_N) {\rm d}W^{k,i}.
	\end{aligned}
\end{equation}
We obtain the following a-priori estimates: via It\^{o}'s formula it holds
\begin{equation}
	\begin{aligned}
		&{\rm d}\|u_N(t)\|^2_{L^2} \\
		&=  2 \left\langle u_N(t), -\Lambda^{2\alpha}u_N(t) + g_R( u_N(t) ) \Pi_N F(u_N(t)) + S_{\theta}(u_N(t))\right\rangle_{L^2} {\rm d}t \\
		&\hspace{0.5cm} + 2 \left\langle u_N, \frac{\sqrt{C_d \nu}}{\|\theta\|_{\ell^2}}\sum_{k \in \mathbb{Z}_0^d}\sum_{i=1}^{d-1}\theta_k \Pi_N((\sigma_{k,i}\cdot\nabla)u_N(t)) {\rm d}W^{k,i}\right\rangle_{L^2}\\
		&\hspace{0.5cm} + \frac{C_d \nu}{\|\theta\|^2_{\ell^2}} \sum_{k \in \mathbb{Z}_0^d}\sum_{i=1}^{d-1}\theta_k^2 \left\|\Pi_N((\sigma_{k,i}\cdot\nabla)u_N(t))\right\|^2_{L^2} {\rm d}t.
	\end{aligned}
\end{equation}
Observe that on the one hand since $\sigma_{k,i}$ are divergence-free, it holds
\[ \langle u_N(t), \Pi_N((\sigma_{k,i}\cdot\nabla)u_N(t))\rangle_{L^2} =0\]
hence the martingale part vanishes, and on the other hand as seen in \cite{flandoli2021a} it holds
\[ 2\langle u_N(t), S_{\theta}(u_N(t))\rangle_{L^2} = - \frac{C_d \nu}{\|\theta\|^2_{\ell^2}} \sum_{k \in \mathbb{Z}_0^d}\sum_{i=1}^{d-1}\theta_k^2 \left\|\Pi((\sigma_{k,i}\cdot\nabla)u_N(t))\right\|^2_{L^2}\]
which in total gives
\begin{equation}
	\frac{{\rm d}}{{\rm d}t}\|u_N(t)\|^2_{L^2} \leq -2 \left\| \Lambda^{\alpha}u\right\|_{L^2} + 2\langle u_N(t), g_R(u_N(t)) \Pi_N F(u_N(t))\rangle_{L^2}.
\end{equation}
Using Remark \ref{rem:hypFGL21} (3) (for which we assume that $\delta >0$ is small enough such that (H2') holds) and the analysis in \cite{flandoli2021b} it holds
\begin{equation}
	\begin{aligned}
		&|\langle u_N(t), g_R(u_N(t)) \Pi_N F(u_N(t))\rangle_{L^2}| \\
		&\lesssim 1 + \frac{1}{2^{\alpha}} \|u_N(t)\|_{H^{\alpha}}^2\\
		& \leq 1 + \frac{1}{2}\left(\|u_N(t)\|_{L^2}^2 + \left\|\Lambda^{\alpha}u_N(t)\right\|_{L^2}^2\right),
	\end{aligned}
\end{equation}
hence we obtain a constant $C$ such that 
\begin{equation}\label{eq:uNbound}
	\sup_{t\in[0,T]} \|u_N(t)\|^2_{L^2} + \int_0^T \left\|\Lambda^{\alpha}u_N(t)\right\|^2_{L^2} {\rm d}t \leq C\left(1+\|u_0\|^2_{L^2}\right).
\end{equation}
In the following we consider the case $\alpha >1$ and follow the analysis in \cite{flandoli2021b} (for $\alpha =1$, proceed by using the results in \cite{flandoli2021a}): recall
\begin{lemma}[cf. {\cite[Lemma 3.3]{flandoli2021b}}]\label{lem:compactLemmaFGL}
	For any $\beta, \gamma, \epsilon >0$ and any $p < \infty$ define
	\begin{align*}
		\mathcal{S}^{\gamma, \beta} &:= L^2(0,T;H^{\alpha}(\mathbb{T}^d)) \cap C([0,T];L^2(\mathbb{T}^d)) \cap C^{\gamma}([0,T];H^{\beta}(\mathbb{T}^d)),\\
		\mathcal{X}^{\epsilon, p} &:= L^2(0,T;H^{\alpha - \epsilon}(\mathbb{T}^d)) \cap L^p(0,T;L^2(\mathbb{T}^d)) \cap C([0,T]; H^{-\epsilon}(\mathbb{T}^d)),
	\end{align*}
	then the embedding $\mathcal{S}^{\gamma, \beta} \hookrightarrow \mathcal{X}^{\epsilon, p}$ is compact and for any finite $K\geq 0$ the set
	\[\mathcal{X}_K := \left\{ f \in \mathcal{X}^{\epsilon, p}: \sup_{t \in [0,T]} \|f(t)\|_{L^2(\mathbb{T}^d)} + \|f\|_{L^2H^{\alpha}(\mathbb{T}^d)} \leq K\right\}\]
	is closed in $\mathcal{X}^{\epsilon, p}$ and hence a Polish space with metric inherited from $\mathcal{X}^{\epsilon, p}$.
\end{lemma}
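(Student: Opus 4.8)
The plan is to treat the statement as two independent assertions: the compactness of the embedding $\mathcal{S}^{\gamma,\beta}\hookrightarrow\mathcal{X}^{\epsilon,p}$, and the fact that $\mathcal{X}_K$ is a closed, hence Polish, subset of $\mathcal{X}^{\epsilon,p}$. For the compactness I would not attack the three target components simultaneously; instead I would first establish relative compactness in the single, weakest component $C([0,T];H^{-\epsilon}(\mathbb{T}^d))$ by an Arz\'ela--Ascoli argument, and then transfer this convergence to $L^2(0,T;H^{\alpha-\epsilon}(\mathbb{T}^d))$ and $L^p(0,T;L^2(\mathbb{T}^d))$ purely by interpolation, using throughout that on the torus the Sobolev embeddings $H^s\hookrightarrow\hookrightarrow H^{s'}$ for $s>s'$ are compact (Rellich).

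For the Arz\'ela--Ascoli step, take $(f_n)$ bounded in $\mathcal{S}^{\gamma,\beta}$. The bound in $C^{\gamma}([0,T];H^{\beta})$, combined with the continuous embedding $H^{\beta}\hookrightarrow H^{-\epsilon}$ (valid since $\beta>0>-\epsilon$), yields a uniform H\"older estimate $\|f_n(t)-f_n(s)\|_{H^{-\epsilon}}\lesssim|t-s|^{\gamma}$, i.e.\ equicontinuity in $H^{-\epsilon}$. The bound in $C([0,T];L^2)$, combined with the compact embedding $L^2\hookrightarrow\hookrightarrow H^{-\epsilon}$, shows that for each fixed $t$ the family $\{f_n(t)\}_n$ is relatively compact in $H^{-\epsilon}$. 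Arz\'ela--Ascoli then extracts a subsequence converging in $C([0,T];H^{-\epsilon})$, and in particular uniformly in time.

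The interpolation step rests on $\|g\|_{H^{s}}\le\|g\|_{H^{-\epsilon}}^{\theta}\|g\|_{H^{\alpha}}^{1-\theta}$ with $s=-\epsilon\theta+\alpha(1-\theta)$, valid for every $s\in[-\epsilon,\alpha]$. Taking $s=\alpha-\epsilon$, integrating in time, pulling out the uniformly small factor $\sup_t\|f_n(t)-f_m(t)\|_{H^{-\epsilon}}\to 0$, and controlling the surviving power of $\|\cdot\|_{H^{\alpha}}$ (whose exponent $2(1-\theta)$ is $<2$) through the $L^2(0,T;H^{\alpha})$ bound via H\"older, gives convergence in $L^2(0,T;H^{\alpha-\epsilon})$. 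Taking $s=0$ gives convergence in $L^p(0,T;L^2)$. I expect this last case to be the main obstacle: for large $p$ a direct interpolation leaves a power of $\|\cdot\|_{H^{\alpha}}$ exceeding the integrability afforded by the $L^2(0,T;H^{\alpha})$ bound, so one must first lower the power of $\|f_n-f_m\|_{L^2}$ using the uniform bound $\sup_t\|f_n(t)-f_m(t)\|_{L^2}\lesssim K$ from $C([0,T];L^2)$, and only then interpolate, choosing the reduced exponent so that the residual power of $\|\cdot\|_{H^{\alpha}}$ stays $\le 2$.

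For the closedness of $\mathcal{X}_K$, suppose $f_n\to f$ in $\mathcal{X}^{\epsilon,p}$ with $f_n\in\mathcal{X}_K$. For each $t$ the uniform $L^2$-bound forces $f_n(t)\rightharpoonup f(t)$ weakly in $L^2$, while the $L^2(0,T;H^{\alpha})$-bound forces $f_n\rightharpoonup f$ weakly in $L^2(0,T;H^{\alpha})$ (the limit being identified via the strong $L^2 H^{\alpha-\epsilon}$-convergence). Weak lower semicontinuity of both norms, together with $\liminf a_n+\liminf b_n\le\liminf(a_n+b_n)$, yields $\sup_t\|f(t)\|_{L^2}+\|f\|_{L^2H^{\alpha}}\le K$, so $\mathcal{X}_K$ is closed. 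Since $\mathcal{X}^{\epsilon,p}$ is a finite intersection of separable Banach spaces endowed with the sum of the three norms, it is a separable complete metric space, i.e.\ Polish, and the closed subset $\mathcal{X}_K$ inherits a complete separable metric. The Arz\'ela--Ascoli and lower-semicontinuity steps are routine once the compact Sobolev embeddings on $\mathbb{T}^d$ are invoked; the only genuinely delicate bookkeeping is the exponent balancing in the $L^p(0,T;L^2)$ interpolation.
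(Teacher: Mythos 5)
Your proposal is correct, but be aware that the paper itself contains no proof of this statement to compare against: the lemma is explicitly \emph{recalled} from \cite{flandoli2021b} (Lemma 3.3 there), and the citation is the paper's entire justification. Your argument --- Arzel\'a--Ascoli in the weakest component $C([0,T];H^{-\epsilon}(\mathbb{T}^d))$ (equicontinuity from the H\"older-in-time bound, pointwise relative compactness from the Rellich embedding $L^2(\mathbb{T}^d)\hookrightarrow\hookrightarrow H^{-\epsilon}(\mathbb{T}^d)$), then the interpolation $\|g\|_{H^{s}}\le\|g\|_{H^{-\epsilon}}^{\theta}\|g\|_{H^{\alpha}}^{1-\theta}$ to transfer the convergence into $L^2(0,T;H^{\alpha-\epsilon}(\mathbb{T}^d))$ and $L^p(0,T;L^2(\mathbb{T}^d))$, including the exponent-lowering device for large $p$, and finally weak lower semicontinuity plus superadditivity of $\liminf$ for the closedness of $\mathcal{X}_K$ --- is a complete and correct Aubin--Lions--Simon-type argument, in the same spirit as the proof in the cited reference, and it also settles the Polish-space claim correctly (finite intersection of separable Banach spaces with the sum norm, closed subset thereof). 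One caveat on hypotheses: you read the H\"older component literally as $C^{\gamma}([0,T];H^{\beta}(\mathbb{T}^d))$ with $\beta>0$, so that $H^{\beta}\hookrightarrow H^{-\epsilon}$ is immediate; however, where the paper invokes the lemma it verifies a bound on $\|u_N\|_{C^{\gamma}H^{-\beta}}$ (negative exponent, consistent with \cite{flandoli2021b}), so the sign in the displayed statement is presumably a typo. Under that reading your equicontinuity step needs one additional interpolation when $\beta>\epsilon$, namely
\begin{equation*}
\|f_n(t)-f_n(s)\|_{H^{-\epsilon}}\le\|f_n(t)-f_n(s)\|_{L^2}^{1-\frac{\epsilon}{\beta}}\,\|f_n(t)-f_n(s)\|_{H^{-\beta}}^{\frac{\epsilon}{\beta}}\lesssim |t-s|^{\gamma\frac{\epsilon}{\beta}},
\end{equation*}
using the uniform $C([0,T];L^2)$ bound on the sequence; this yields equicontinuity in $H^{-\epsilon}$ with the smaller exponent $\gamma\epsilon/\beta$, and the rest of your proof goes through unchanged.
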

Hence we proceed to verify that there exist $p>1, \beta, \gamma >0$ such that
\begin{equation}\label{eq:compactBound}
	\sup_{N \in \mathbb{N}} \mathbb{E}\left[ \left(\|u_N\|_{L^2H^{\alpha}} + \|u_N\|_{L^{\infty}L^2} + \|u_N\|_{C^{\gamma}H^{-\beta}}\right)^p\right] < \infty.
\end{equation}
Denote
\[M_N(t) := \frac{\sqrt{C_d \nu}}{\|\theta\|_{\ell^2}} \int_0^t \sum_{k \in \mathbb{Z}_0^d}\sum_{i=1}^{d-1}\theta_k \Pi_N((\sigma_{k,i}\cdot\nabla)u_N(s)) {\rm d}W^{k,i}_s,\]
then for any $t,s \in [0,T]$ we deduce from a similar analysis as in \cite{flandoli2021b} that
\begin{equation}
	\mathbb{E}\left[ \|M_N(t) - M_N(s)\|_{H^{-\beta}}^{2p}\right] \lesssim \|\theta\|_{\ell^{\infty}}^{2p} \left(1+\|u_0\|_{L^2}^2\right)^p |t-s|^p.
\end{equation}
Further, by assumption on $F$ it holds for
\[ v_N(t) := \int_0^t -\Lambda^{2\alpha} u_N(s) + g_R(u_N(s))\Pi_N F(u_N(s)) {\rm d}s\]
that
\begin{equation}
	\begin{aligned}
		\|v_N\|_{C^{\frac{1}{2}}H^{-\alpha}} &\leq \|v_N\|_{W^{1,2}H^{-\alpha}} \\
		&\leq \left(1 + \|u_N\|_{L^{\infty}L^2}^{\beta_1}\right)\left(1 + \|u_N\|_{L^2H^{\alpha}}\right).
	\end{aligned}
\end{equation}
It thus remains to estimate
\[\left\| \int_0^{\cdot} S_{\theta}(u_N(s)){\rm d}s\right\|_{W^{1,2}H^{-\alpha}}:\]
Observe from \cite{flandoli2021a} that it holds for any $t,s \in [0,T]$ and $l \in \mathbb{Z}_0^d, j \in \{1,...,d-1\}$
\begin{equation}
	\begin{aligned}
		\left| \left\langle \int_s^t S_{\theta}(u_N(r)) {\rm d}r , \sigma_{l,j}\right\rangle_{L^2}\right| &= \left|  \int_s^t \langle u_N(r)) , S_{\theta}(\sigma_{l,j})\rangle_{L^2}{\rm d}r \right|\\
		&\lesssim \|u_N\|_{L^{\infty}L^2} |l|^2|t-s|
	\end{aligned}
\end{equation}
and hence
\begin{equation}
	\begin{aligned}
		\left\|\int_s^t S_{\theta}(u_N(r)) {\rm d}r \right\|_{H^{-\alpha}}^2 &= \sum_{l,j} \frac{\left| \left\langle \int_s^t S_{\theta}(u_N(r)) {\rm d}r , \sigma_{l,j}\right\rangle_{L^2}\right|^2}{|l|^{-2\alpha}}\\
		& \lesssim \|u_N\|_{L^{\infty}L^2}^2 |t-s|^2\sum_l |l|^{2(1-\alpha)}
	\end{aligned}
\end{equation}
where the series converges for $\alpha >1$. In total this gives
\begin{equation}
	\left\| \int_0^{\cdot} S_{\theta}(u_N(s)){\rm d}s\right\|_{C^{\frac{1}{2}}H^{-\alpha}} \lesssim \|u_N\|_{L^{\infty}L^2}
\end{equation}
and concludes the proof of \eqref{eq:compactBound}. Thus we obtain existence of weak solutions as in \cite{flandoli2021b} (for the reader's convenience we repeat here the core arguments): denote $\mu_N := {\rm Law}(u_N)$, then by Lemma \ref{lem:compactLemmaFGL} and Prokhorov's theorem, the family $\{\mu_N\}_N$ is tight in $\mathcal{X}^{\epsilon, p}$ and we can find $K$ large enough such that $\{\mu_N\}_N$ are supported on $\mathcal{X}_K$ and thus tight therein as well. Existence of weak solutions then follows from Skorokhod's representation theorem: for $W:=(W^{k,i})_{k,i}$ denote $P_N := {\rm Law}(u_N, W)$, then $\{P_N\}_N$ is tight in $\mathcal{X}_K \times C\left([0,T], \mathbb{C}^{\mathbb{Z}_0^2}\right)$. Hence we can find another probability space $(\tilde{\Omega}, \tilde{\mathcal{F}}, (\tilde{\mathcal{F}}_t), \tilde{\mathbb{P}})$ and corresponding $(\tilde{u}_N, \tilde{W}_N)$ such that $(\tilde{u}_N, \tilde{W}_N)$ converge to $(\tilde{u}, \tilde{W})$ $\tilde{\mathbb{P}}$-almost surely in $\mathcal{X}_K \times C\left([0,T], \mathbb{C}^{\mathbb{Z}_0^2}\right)$ and ${\rm Law}(\tilde{u}_N, \tilde{W}_N) = P_N$. Furthermore $\tilde{u}$ solves \eqref{eq:StratDivfree} with $\tilde{W}$ by $\tilde{\mathbb{P}}$-almost sure convergence where the nonlinear part converges due to continuity of $F$ on $\mathcal{X}_K$ (see Lemma 3.4, \cite{flandoli2021b}).\\
\\
\noindent
On (2): For pathwise uniqueness assume that on some probability space $(\Omega, \mathcal{F}, (\mathcal{F}_t), \mathbb{P})$ there exist two weak solutions $u_1, u_2$ to \eqref{eq:ItoDivfree} with the same $W$ and same initial condition $u_0$. Let $\tilde{u}:=u_1 - u_2$, then for $\phi \in C^{\infty}(\mathbb{T}^d)$ it holds
\begin{equation}
	\begin{aligned}
		&\langle \tilde{u}(t), \phi\rangle_{L^2} \\
		&= \int_0^t \langle - \Lambda^{2\alpha}\tilde{u}(s) +\left(g_R(u_1(s))F(u_1(s))-g_R(u_2(s))F(u_2(s))\right) \\
		&\hspace{3cm}+ S_{\theta}(\tilde{u}(s)), \phi\rangle_{L^2} {\rm d}s\\
		&\hspace{0.5cm} + \frac{\sqrt{C_d \nu}}{\|\theta\|_{\ell^2}}  \sum_{k \in \mathbb{Z}_0^d}\sum_{i=1}^{d-1}\theta_k \int_0^t\langle \Pi((\sigma_{k,i}\cdot\nabla)\tilde{u}(s)), \phi\rangle_{L^2} {\rm d}W^{k,i}_s.
	\end{aligned}
\end{equation}
By It\^{o}'s formula we obtain as before
\begin{equation}
	\begin{aligned}
		&\frac{{\rm d}}{{\rm d}t} \|\tilde{u}(t)\|_{L^2}^2 \\
		&= -2\|\Lambda^{\alpha}\tilde{u}(t)\|_{L^2}^2 + \langle g_R(u_1(t))F(u_1(t))-g_R(u_2(t))F(u_2(t)), \tilde{u}(t)\rangle_{L^2}\\
		&= (3.11) \text{ in \cite{flandoli2021b}.}
	\end{aligned}
\end{equation}
Estimating the nonlinear part by the same analysis as in \cite{flandoli2021b} yields
\begin{equation}
	\begin{aligned}
		\frac{{\rm d}}{{\rm d}t} \|\tilde{u}(t)\|_{L^2}^2 &\leq \left(1+ \|u_1(t)\|_{H^{\alpha}}^2 + \|u_2(t)\|_{H^{\alpha}}^2\right) \|\tilde{u}(t)\|_{L^2}^2, \\
		\|\tilde{u}(0)\|_{L^2}^2&=0,
	\end{aligned}
\end{equation}
where the term in brackets is integrable, and hence implies pathwise uniqueness by Gronwall's lemma.\\
Part (3) is then a consequence of the Yamada-Watanabe theorem (e.g. \cite[Theorem 2.1]{roeck2008}).\\
\noindent
Next consider the following choice
\begin{equation}\label{eq:specialTheta}
\theta_k^N = \frac{1}{|k|^{\lambda}}\mathds{1}_{\{N \leq |k| \leq 2N\}}, \quad k \in \mathbb{Z}_0^d, N \in \mathbb{N},
\end{equation}
for some $\lambda >0$. For any smooth divergence-free vector field $\phi$, Theorem 5.1 in \cite{flandoli2021a} implies that
\begin{equation}\label{eq:convStheta}
	\lim_{N \rightarrow \infty} S_{\theta^N}(\phi) = \frac{3\nu}{5}\Delta \phi
\end{equation}
in $L^2(\mathbb{T}^d)$. With this we shall prove the following result:
\begin{theorem}[cf. {\cite[Proposition 3.7]{flandoli2021b}} and {\cite[Theorem 1.4]{flandoli2021a}}]\label{thm:convDet}
	Let $\{u^N_0\}_N\subset L^2_{\rm df}(\mathbb{T}^d)$ converge weakly in $L^2_{\rm df}(\mathbb{T}^d)$ to some $u_0\in L^2_{\rm df}(\mathbb{T}^d)$. Further let $u^N$ denote the unique strong solution to \eqref{eq:StratDivfree} associated to $\theta^N$ defined in \eqref{eq:specialTheta} starting at $u_0^N$. Then for every $\epsilon >0, p \geq 2$, $u^N$ converges in probability in the topology of $\mathcal{X}^{\epsilon, p}$ to the unique solution $u := u(\cdot; u_0, \nu)$ to 
\begin{equation}\label{eq:scalLim}
	\begin{aligned}
		\partial_t u &= -\Lambda^{2\alpha} u + \frac{3\nu}{5}\Delta u + g_R(u)F(u),\\
		u(0,\cdot) &= u_0.
	\end{aligned}
\end{equation}
\end{theorem}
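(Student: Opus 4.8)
The plan is to follow the classical scaling-limit scheme for transport noise from \cite{flandoli2021a,flandoli2021b}: establish tightness of the laws of $u^N$, identify every limit point as the unique deterministic solution of \eqref{eq:scalLim}, and then upgrade to convergence in probability by exploiting that the limit is deterministic. The whole argument runs parallel to the existence proof above, with the role of the Galerkin index now played by $N$.

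First I would collect the a-priori bounds uniformly in $N$. The energy identity behind \eqref{eq:uNbound} is independent of the noise parameters---the martingale drops out of the $L^2$-balance since $\langle u, \Pi((\sigma_{k,i}\cdot\nabla)u)\rangle_{L^2}=0$, and $2\langle u, S_{\theta^N}(u)\rangle_{L^2}$ cancels the quadratic variation---so $\|u^N\|_{L^\infty L^2}+\|u^N\|_{L^2 H^\alpha}\le C(1+\|u_0^N\|_{L^2}^2)$ with $C$ not depending on $N$, and $\|u_0^N\|_{L^2}$ is bounded because $u_0^N\rightharpoonup u_0$. For the time-regularity piece in \eqref{eq:compactBound} the drift and correction estimates are uniform in $N$ (the bound on the $S_{\theta^N}$-drift in $C^{\frac{1}{2}}H^{-\alpha}$ uses only $\sum_l|l|^{2(1-\alpha)}<\infty$ for $\alpha>1$ and the $\ell^2$-normalization), while the martingale obeys $\mathbb{E}[\|M_N(t)-M_N(s)\|_{H^{-\beta}}^{2p}]\lesssim \|\theta^N\|_{\ell^\infty}^{2p}(1+\|u_0\|_{L^2}^2)^p|t-s|^p$ with $\|\theta^N\|_{\ell^\infty}=N^{-\lambda}$. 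Hence \eqref{eq:compactBound} holds uniformly, and Lemma \ref{lem:compactLemmaFGL} together with Prokhorov's theorem gives tightness of $\{{\rm Law}(u^N)\}_N$ in $\mathcal{X}^{\epsilon,p}$, supported on $\mathcal{X}_K$ for $K$ large.

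Next I would pass to the limit. By Skorokhod's representation I obtain, along a subsequence, copies $(\tilde u^N,\tilde W^N)\to(\tilde u,\tilde W)$ almost surely in $\mathcal{X}_K\times C([0,T];\mathbb{C}^{\mathbb{Z}_0^d})$, and I test the It\^o formulation \eqref{eq:ItoDivfree} against a smooth divergence-free $\phi$. The initial term passes by $u_0^N\rightharpoonup u_0$; the linear term passes after moving $\Lambda^{2\alpha}$ onto $\phi$ and using the strong $\mathcal{X}^{\epsilon,p}$-convergence; the nonlinearity $g_R(u^N)F(u^N)$ passes by continuity of $F$ on $\mathcal{X}_K$ (Lemma 3.4 in \cite{flandoli2021b}); and the martingale part vanishes in $L^{2p}(\Omega)$ by the bound above, since $\|\theta^N\|_{\ell^\infty}=N^{-\lambda}\to 0$. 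The delicate term is the correction $S_{\theta^N}(u^N)$: using that $S_{\theta^N}$ is symmetric I rewrite $\langle S_{\theta^N}(u^N(s)),\phi\rangle_{L^2}=\langle u^N(s), S_{\theta^N}(\phi)\rangle_{L^2}$, and then invoke \eqref{eq:convStheta}, i.e. $S_{\theta^N}(\phi)\to\frac{3\nu}{5}\Delta\phi$ in $L^2(\mathbb{T}^d)$, together with the uniform operator bound $\sup_N\|S_{\theta^N}(\phi)\|_{L^2}\lesssim\|\phi\|_{H^2}$ and the $L^\infty L^2$-bound on $u^N$, to conclude by dominated convergence that $\int_0^t\langle u^N, S_{\theta^N}(\phi)\rangle_{L^2}\,{\rm d}s\to\int_0^t\langle\frac{3\nu}{5}\Delta u,\phi\rangle\,{\rm d}s$. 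This identifies $\tilde u$ as a weak solution of the deterministic equation \eqref{eq:scalLim}.

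Finally, the cut-off $g_R$ renders the nonlinearity in \eqref{eq:scalLim} globally controlled and the enhanced dissipation $-\Lambda^{2\alpha}+\frac{3\nu}{5}\Delta$ dominates, so the same Gronwall estimate as in step (2) of the existence proof yields uniqueness of $u=u(\cdot;u_0,\nu)$. Thus every subsequential limit coincides with this deterministic $u$, so the full sequence $u^N$ converges in law to the Dirac mass at $u$ in $\mathcal{X}^{\epsilon,p}$; since the limit is deterministic, convergence in law is equivalent to convergence in probability, which is the claim. I expect the main obstacle to be precisely the correction term: one must transfer the fixed-test-function convergence \eqref{eq:convStheta} through the random, time-dependent $u^N$, and it is the self-adjointness rewriting combined with the uniform bound on $S_{\theta^N}$ and dominated convergence---rather than any stochastic compactness---that carries this step, with the weak convergence $u_0^N\rightharpoonup u_0$ entering only through the uniformity of the a-priori bounds.
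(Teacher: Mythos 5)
Your proposal is correct and follows essentially the same route as the paper: uniform a-priori bounds (using that the weakly convergent $u_0^N$ are bounded in $L^2$), tightness in $\mathcal{X}^{\epsilon,p}$ via Lemma \ref{lem:compactLemmaFGL} and Prokhorov, identification of the limit through the self-adjointness rewriting $\langle S_{\theta^N}(u^N),\phi\rangle_{L^2}=\langle u^N,S_{\theta^N}(\phi)\rangle_{L^2}$ combined with \eqref{eq:convStheta}, vanishing of the martingale part, uniqueness for \eqref{eq:scalLim}, and convergence in probability because the limit law $\delta_u$ is deterministic. The only cosmetic difference is that you run the identification step through a Skorokhod representation, whereas the paper passes the weak convergence of laws directly through the continuous functional $T^{\phi}$ on $\mathcal{X}_K$ — two standard implementations of the same argument from \cite{flandoli2021b}.
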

\begin{proof}
Recall from \eqref{eq:uNbound} that it holds
\begin{equation*}
	\sup_{t\in[0,T]} \|u^N(t)\|^2_{L^2} + \int_0^T \left\|\Lambda^{\alpha}u^N(t)\right\|^2_{L^2} {\rm d}t \leq C\left(1+\|u_0^N\|^2_{L^2}\right).
\end{equation*}
Since $(u_0^N)_N$ is weakly convergent in $L^2(\mathbb{T}^d)$, it is bounded therein and we may thus find a constant $C$ such that
\begin{equation}\label{eq:unifBound}
	\sup_{N \in \mathbb{N}} \left(\sup_{t\in[0,T]} \|u^N(t)\|^2_{L^2} + \int_0^T \left\|\Lambda^{\alpha}u^N(t)\right\|^2_{L^2} {\rm d}t\right)\leq C \quad \mathbb{P}\text{-a.s.}
\end{equation}
Similar to previous estimates we may further find $q>1, \beta, \gamma >0$ such that
\begin{equation}
	\sup_{N \in \mathbb{N}} \mathbb{E}\left[ \left( \|u^N\|_{L^{\infty}L^2} + \|u^N\|_{L^2H^{\alpha}} + \|u^N\|_{C^{\gamma}H^{-\beta}}\right)^q\right] < \infty.
\end{equation}
Thus for every $\epsilon >0$ and $q \geq 2$ by \cite[Lemma 3.3]{flandoli2021b}, the sequence of laws $(\mu^N)_N, \mu^N := {\rm Law}(u^N)$, is tight in $\mathcal{X}^{\epsilon, q}$ and, for $K$ big enough, also in $\mathcal{X}_K$ (by \eqref{eq:unifBound}). By Prokhorov's theorem, we may thus extract a subsequence $(\mu^{N_n})_n$ which converges weakly to some probability measure $\mu$ on $\mathcal{X}_K$. For every $\phi \in C^{\infty}(\mathbb{T}^d)$, let the function $T^{\phi}:\mathcal{X}_K \rightarrow C([0,T];\mathbb{R})$ be defined by
\begin{align*}
\left(T^{\phi}f\right)(t)&:= \langle f(t), \phi \rangle_{L^2} - \langle u_0, \phi \rangle_{L^2} \\
&\hspace{0.5cm}- \int_0^t g_R(f(s)) \langle F(f(s)), \phi \rangle_{L^2} {\rm d}s\\
&\hspace{0.5cm} - \int_0^t \langle f(s),- \Lambda^{2\alpha} \phi + S_{\theta^N}(\phi)\rangle_{L^2}{\rm d}s,
\end{align*}
then due to Lemma 3.4 in \cite{flandoli2021b} and \eqref{eq:convStheta}, $T^{\phi}$ is continuous on $\mathcal{X}_K$. Following the same reasoning as in \cite{flandoli2021b}, it therefore holds $\mu=\delta_u$ where $u$ is the unique solution to \eqref{eq:scalLim} which concludes the proof.
\end{proof}
We conclude as in the proof of \cite[Theorem 1.4]{flandoli2021b}: Let $\epsilon>0 ,T\in (0,\infty)$ be fixed, then by hypothesis (H4) there exist $\nu>0, R >1$ such that
\[ \sup_{u_0 \in \mathcal{K}\cap \mathcal{D}} \sup_{t \in [0,T]} \|u(t; u_0, \nu)\|_{L^2} \leq R-1\]
hence $u(\cdot;u_0, \nu)$ is a solution to the deterministic equation without cut-off. Let $u^R(\cdot; u_0, \theta^N, \nu)$ denote the solution to the cut-off equation \eqref{eq:StratDivfree} with $\theta^N$ as in \eqref{eq:specialTheta}. By choice of $\mathcal{K}$ and due to Theorem \ref{thm:convDet} it holds
\begin{equation}
	\begin{aligned}
		&\lim_{N \rightarrow \infty} \sup_{u_0 \in \mathcal{K}\cap \mathcal{D}} \mathbb{P}\left[\sup_{t \in [0,T]} \left\|u^R(t;u_0,\theta^N,\nu)\right\|_{H^{-\delta}}>R\right]\\
		&\leq \lim_{N \rightarrow \infty} \sup_{u_0 \in \mathcal{K}\cap \mathcal{D}} \mathbb{P}\left[\sup_{t \in [0,T]} \left\|u^R(t;u_0,\theta^N,\nu)-u(t;u_0,\nu)\right\|_{H^{-\delta}}>1\right]=0
	\end{aligned}
\end{equation}
and hence there exists $N$ big enough such that uniformly in $u_0 \in \mathcal{K}\cap \mathcal{D}$ it holds
\begin{equation}
	\mathbb{P}\left[\sup_{t \in [0,T]} \left\|u^R(t;u_0,\theta^N,\nu)\right\|_{H^{-\delta}}\leq R\right]>1 - \epsilon,
\end{equation}
i.e. $u^R$ solves \eqref{eq:StratDivfree} without cut-off.

\end{document}